\title{Average Cost Optimality of Partially Observed MDPs: Contraction of Non-Linear Filters, Optimal Solutions and Approximations}
\author{Yunus Emre Demirci}
\address{Queen's University, Department of Mathematics and Statistics, Kingston, Ontario, Canada}
 \email{21yed@queensu.ca}
\author{Ali Devran Kara}
\address{University of Michigan Ann Arbor, Department of Mathematics} 
 \email{alikara@umich.edu}  
 \author{Serdar Yüksel}
\address{Queen's University, Department of Mathematics and Statistics, Kingston, Ontario, Canada} 
 \email{yuksel@queensu.ca}  
\newcommand{\bea}{\begin{eqnarray}}
\newcommand{\ena}{\end{eqnarray}}
\newcommand{\beas}{\begin{eqnarray*}}
\newcommand{\enas}{\end{eqnarray*}}
\newcommand{\beq}{\begin{equation}}
\newcommand{\enq}{\end{equation}}
\newtheorem{theorem}{Theorem}[section]
\newtheorem{corollary}{Corollary}[section]
\newtheorem{lemma}{Lemma}[section]
\newtheorem{definition}{Definition}[section]
\newtheorem{example}{Example}[section]
\newtheorem{defn}{Definition}[section]
\newtheorem{note}{Note}[section]
\newcommand\norm[1]{\left\lVert#1\right\rVert}
\newcommand{\T}{\mathcal{T}}
\newcommand{\X}{\mathbb{X}}
\newcommand{\Y}{\mathbb{Y}}
\newcommand{\U}{\mathbb{U}}
\newcommand{\R}{\mathbb{R}}
\newcommand{\1}{\mathbbm{1}}
\newcommand{\N}{\mathbb{N}}
\let\phi\varphi
\newcommand{\Z}{\mathcal{Z}}
\newcommand{\B}{\mathbb{B}}
\newcommand{\sT}{\mathcal{T}}
\newcommand{\sZ}{\mathcal{Z}}
\newcommand{\Zplus}{\mathbb{Z}_{\geq 0}}
\newcommand{\F}{\mathbb{F}}
\newcommand{\C}{\mathbb{C}}
\newcommand{\Pf}{\mathcal{P}} 
\newtheorem{assumption}{Assumption}
\newcommand{\V}{\mathcal{V}}
\let\phi\varphi
\newcounter{tempcounter}
\keywords{Non-linear filtering, average cost optimality equation}
\begin{document}



\begin{abstract}   
  The average cost optimality is 
  known to be a challenging problem 
  for partially observable stochastic control, 
  with few results available beyond 
  the finite state, action, and measurement setup, for which somewhat restrictive conditions are available. 
  In this paper, we present explicit 
  and easily testable conditions 
  for the existence of solutions to 
  the average cost optimality equation 
  where the state space is compact. 
  In particular, we present a new contraction based 
  analysis, which is new to the literature 
  to our knowledge, building on recent 
  regularity results for non-linear filters.
  Beyond establishing existence, we also present 
  several implications of our analysis that are new to the literature: (i) robustness to incorrect priors, (ii) near optimality of policies based on 
  quantized approximations, (iii) near optimality of policies with finite memory, and (iv) convergence in Q-learning. In addition to our main theorem, each of these represents a novel contribution for average cost criteria.  
\end{abstract}

\maketitle

\section{Introduction}

We study optimal control for partially observable 
Markov decision processes (PODMPs) under the 
average cost criterion. 
Let $\X$ denote a standard Borel space, 
which is the state space of a partially 
observed controlled Markov process. 
Let $\mathbb{B}(\mathbb{X})$ be its Borel 
$\sigma$-field.
Let $\mathbb{C}_b(\mathbb{X})$ be the set of all 
continuous, bounded functions on $\mathbb{X}$. 
Here and throughout the 
paper $\mathbb{Z}_+$ denotes the set of non-negative
integers and $\N$ 
denotes the set of positive integers. Let
$\Y$ be a standard Borel space denoting the 
observation space of the model, 
and let the state be observed through an
observation channel $Q$. 
The observation channel, $Q$, is 
defined as a stochastic kernel (regular
conditional probability) from  
$\X$ to $\Y$, such that
$Q(\,\cdot\,|x)$ is a 
probability measure on the power set 
$P(\Y)$ of $\Y$ for every $x
\in \mathbb{X}$, and $Q(A|\,\cdot\,): \X \to [0,1]$ 
is a Borel
measurable function for every $A \in P(\Y)$.  A
decision maker (DM) is located at the output 
of the channel $Q$, and hence only sees 
the observations $\{Y_t,\, t\in \Zplus\}$ and choosing its actions 
from $\U$, the action space which is a compact set. 
An {\em admissible policy} $\gamma$ is a
sequence of control functions $\{\gamma_t,\, t\in \Zplus\}$ such
that $\gamma_t$ is measurable with respect to the $\sigma$-algebra
generated by the information variables
$
I_t=\{Y_{[0,t]},U_{[0,t-1]}\}, \quad t \in \mathbb{N}, \quad
I_0=\{Y_0\},
$
where
\begin{equation}
\label{eq_control}
U_t=\gamma_t(I_t),\quad t\in \Zplus,
\end{equation}
are the $\mathbb{U}$-valued control
actions and 
$Y_{[0,t]} = \{Y_s,\, 0 \leq s \leq t \}, \quad U_{[0,t-1]} =
\{U_s, \, 0 \leq s \leq t-1 \}.$
In the above, the dependence of control policies on the 
initial distribution $\pi_0$ is implicit. We will 
denote the collection of admissible control policies as $\Gamma$.
The update rules of the system are 
determined by (\ref{eq_control}) and the following
relationships:
\[  \Pr\bigl( (X_0,Y_0)\in B \bigr) =  \int_B \mu(dx_0)Q(dy_0|x_0), \quad B\in \mathcal{B}(\mathbb{X}\times\mathbb{Y}), \]
where $\mu$ is the (prior) distribution of the initial state $X_0$, and
\begin{eqnarray*}
\label{eq_evol}
 &\Pr\biggl( (X_t,Y_t)\in B \, \bigg|\, (X,Y,U)_{[0,t-1]}=(x,y,u)_{[0,t-1]} \biggr) \\
& = \int_B Q(dy_t|x_t)   \mathcal{T}(dx_t|x_{t-1}, u_{t-1}),  
\end{eqnarray*}
$B\in \mathcal{B}(\mathbb{X}\times\mathbb{Y}), t\in \mathbb{N},$ 
where $\sT$ is the transition 
kernel of the model which is a stochastic 
kernel from $\X\times\U$ to $\mathbb{X}$. 

We may let the agent's goal be to minimize the expected discounted cost
$$
J_\beta(\mu, \gamma)=E_\mu^{\gamma}\left[\sum_{t=0}^{\infty} \beta^t c\left(X_t, U_t\right)\right]
$$
for some discount factor $\beta \in(0,1)$, over the set of admissible policies $\gamma \in \Gamma$, where $c: \mathbb{X} \times \mathbb{U} \rightarrow \mathbb{R}$
is the stage-wise measurable cost function, 
and the expectation $E_\mu^{\gamma}$ is taken over the initial state probability measure $\mu$ under policy $\gamma$.
The optimal cost for the discounted infinite horizon is defined as 
$$J_\beta^*(\mu)=\inf _{\gamma \in \Gamma} J_\beta(\mu, \gamma).$$

The average cost control problem 
under partial observations involves 
finding an optimal policy that minimizes 
the average cost of the system over an infinite horizon:
$$J^*(\mu)=\inf _{\gamma \in \Gamma} J(\mu, \gamma)$$
where
$$
J(\mu, \gamma)=\limsup_{n \to \infty}\frac{1}{n}E_\mu^{\gamma}\left[\sum_{t=0}^{n-1} c\left(X_t, U_t\right)\right]
.$$

{\bf Contribution:}

\begin{itemize}
\item[(i)] In the theory of Partially Observable Markov 
Decision Processes (POMDPs), conditions for the existence of solutions to 
the average cost optimality equation (ACOE) have few findings in the literature,
especially in scenarios beyond those with finite states, 
actions, and measurements, often restricted by stringent 
conditions, as we discuss in detail in the paper.
This paper introduces a novel result concerning 
the existence of a solution for the ACOE,
 by utilizing a Wasserstein 
regularity result (Theorem \ref{ergodicity}), 
as detailed in Theorem \ref{main}.
Under Assumption \ref{main_assumption},
we show the existence of a 
solution for the ACOE. We provide a detailed comparison with the existing results in the literature and highlight the explicit nature of our conditions.

\item[(ii)] Subsequently, the paper then presents several applications and implications of the existence of a solution to ACOE for POMDPs. These lead to new contributions in this area of study to the best of our knowledge:
\begin{enumerate}
\item Subsection \ref{robustness} establishes robustness to incorrect priors under universal filter stability, demonstrating 
that an optimal policy designed for an incorrect prior remains optimal when applied for the correct prior
under the average cost criteria, leading to a complete robustness property. Additionally, the subsection includes an analysis 
for the discounted cost setup.
\item Subsection \ref{Near_opt_quant} establishes near-optimality of quantized approximation 
policies for average cost criteria.
\item Subsection \ref{Near_opt_fW} examines how, 
under conditions of filter stability, optimal policies 
derived from a finite window of measurements and actions for the 
discounted cost criteria are near-optimal solutions 
for average cost criteria in POMDPs.
\item We also highlight the use of Q-learning to 
derive near-optimal policies applicable for 
both discounted finite window and discounted 
quantized approximation scenarios.

\end{enumerate}
\end{itemize}

\subsection{Notation and preliminaries}

\hfill\break

{\bf Belief MDP reduction for POMDPs.}
It is known that any POMDP can be reduced to a completely observable Markov process \cite{Yus76}, \cite{Rhe74}, whose states are the posterior state distributions or {\it belief}s of the observer; that is, the state at time $n$ is
\begin{align}
z_n(\,\cdot\,) := P\{X_{n} \in \,\cdot\, | y_0,\ldots,y_n, u_0, \ldots, u_{n-1}\} \in {\mathcal P}(\mathbb{X}). \nonumber
\end{align}
We call this equivalent process the filter process. 
We denote by
$\Z:={\mathcal P}(\mathbb{X})$ 
the set of probability measures on 
$(\mathbb{X}, \mathbb{B}(\mathbb{X}))$ 
under the weak convergence topology, where, 
under this topology $\mathcal{Z}$ 
is also a standard Borel space, that is, $\Z=\mathcal{P}(\mathbb{X})$ is separable and completely metrizable under the weak convergence topology.  
The filter process has state space 
$\mathcal{Z}$ and 
action space $\mathbb{U}$.
Let $\Pf(\mathcal{Z})$ denote the 
probability measures on $\mathcal{Z}$, 
equipped with the weak convergence topology. 

The transition probability $\eta$ of the filter 
process can be determined via the 
following equation 
\cite{HernandezLermaMCP, Rhe74, Yus76}: 
\begin{align}\label{conteta}
\eta(\cdot \mid z, u)=\int_{\mathbb{Y}} 1_{\{F(z, u, y) \in \cdot\}} P(d y \mid z, u),
\end{align}
where 
$$P(\cdot \mid z, u)=\operatorname{Pr}\left\{Y_{n+1} \in \cdot \mid Z_n=z, U_n=u\right\}$$ 
from $\mathcal{Z} \times \mathbb{U}$ to $\mathbb{Y}$ and 
$$
F(z, u, y):=\operatorname{Pr}\left\{X_{n+1} \in \cdot \mid Z_n=z, U_n=u, Y_{n+1}=y\right\}
$$
from $\mathcal{Z} \times \mathbb{U} \times \mathbb{Y}$ to $\mathcal{Z}$.


The one-stage cost function $\tilde{c}:\mathcal{Z} \times \mathbb{U} \rightarrow \mathbb{R}$ is a Borel measurable function and it is given by
\begin{align}\label{tildecost}
\tilde{c}(z, u):=\int_{\mathbb{X}} c(x, u) z(d x),
\end{align}
where $c: \mathbb{X} \times \mathbb{U} \rightarrow \mathbb{R}$ is the stage-wise cost function.

This way, we obtain a completely observable Markov decision process from the POMDP, with the components 
$(\mathcal{Z}, \mathbb{U}, \tilde{c}, \eta)$. The resulting MDP is often referred to as the belief-MDP.

{\bf Convergence notions for probability measures.}
Let $\{\mu_n,\, n\in \mathbb{N}\}$ be a sequence in
$\mathcal{P}(\mathbb{X})$.
The sequence $\{\mu_n\}$ is said to  converge
to $\mu\in \mathcal{P}(\mathbb{X})$ \emph{weakly} if
\begin{align}\label{weakConvD}
 \int_{\mathbb{X}} f(x) \mu_n(dx)  \to \int_{\mathbb{X}}f(x) \mu(dx)
\end{align}
for every continuous and bounded $f: \mathbb{X} \to \mathbb{R}$.

For two probability measures $\mu,\nu \in
\mathcal{P}(\mathbb{X})$, the \emph{total variation} metric
is given by
\begin{align}
\|\mu-\nu\|_{TV}:= & 2 \sup_{B \in {\mathcal B}(\mathbb{X})}
|\mu(B)-\nu(B)| \nonumber \\
 =&  \sup_{f: \, \|f\|_{\infty} \leq 1} \bigg| \int f(x)\mu(dx) -
\int f(x)\nu(dx) \bigg|, \label{TValternative}
\end{align}
where the supremum is over all measurable real $f$ such that
$\|f\|_{\infty} = \sup_{x \in \mathbb{X}} |f(x)|\le 1$.

Finally, the bounded-Lipschitz metric 
$\rho_{BL}$ \cite[p.109]{villani2008optimal} 
can also be used to metrize weak convergence:
\begin{align}
\rho_{BL}(\mu,\nu) = \sup_{\|f\|_{BL}\leq1} \biggl| \int_{\mathbb{X}} f(x) \mu(dx) - \int_{\mathbb{X}} f(x) \nu(dx) \biggr|, \label{BLD}
\end{align}
where
\begin{align}
\|f\|_{BL} := \|f\|_{\infty} + \norm{f}_{L}, 
\quad \norm{f}_{L}= \sup_{x \neq x'} \frac{f(x) - f(x')}{d_{\mathbb{X}}(x,x')} \nonumber
\end{align}
and $d$ is the metric on $\mathbb{X}$. 

When $\X$ is compact, one way to metrize $\mathcal{Z}$ under 
the weak convergence topology is via
the Kantorovich-Rubinstein metric 
(also known as the Wasserstein metric of order $1$) 
(\cite{Bog07}, Theorem 8.3.2) 
defined as follows 
\begin{align}\label{defkappanorm}
&W_1(\mu, \nu):=\sup \left\{\int_{\mathbb{X}} f(x) \mu(d x)-\int_{\mathbb{X}} f(x) \nu(d x) : f \in \operatorname{Lip}(\mathbb{X},1) \right\},
\end{align}
$\mu, \nu \in \mathcal{Z}$, where for $k \in \mathbb{N}$,
\begin{align*}
\operatorname{Lip}(\mathbb{X},k)=\{f:\mathbb{X}\to \mathbb{R},\; \norm{f}_{L}\leq k\}.
\end{align*}
\begin{definition}\label{Dobrushincoefficient}
    [\cite{dobrushin1956central}, Equation 1.16.] 
    For a kernel operator $K: S_1 \rightarrow \mathcal{P}\left(S_2\right)$ 
we define the Dobrushin coefficient as:
$$
\delta(K)=\inf \sum_{i=1}^n \min \left(K\left(x, A_i\right), K\left(y, A_i\right)\right)
$$
where the infimum is over all $x, y \in S_1$ and all partitions $\left\{A_i\right\}_{i=1}^n$ of $S_2$.
\end{definition}

\subsection{Average Cost Optimality Equation}\label{acoe_sec} 
Average cost optimality equation (ACOE) plays a crucial role for the analysis and the existence results of MDPs under the infinite horizon average cost optimality criteria. In the framework of the belief-MDP noted above, the triplet $(h,\rho^*,\gamma^*)$ where 
$h:\mathcal{Z}\to \R$,
$\gamma^*:\Z \to \U$
are 
measurable functions and $\rho^*\in\R$ is a 
constant, forms the ACOE if 
\begin{align}\label{acoe}
h(z)+\rho^* &=\inf_{u\in\U}\left\{\tilde{c}(z,u) + \int h(z_1)\eta(dz_1|z,u)\right\}\nonumber\\
&=\tilde{c}(z,\gamma^*(z)) + \int h(z_1)\eta(dz_1|z,\gamma^*(z))
\end{align}
for all $z\in\mathcal{Z}$. It is well known that (see e.g. \cite[Theorem 5.2.4]{HernandezLermaMCP}) if (\ref{acoe}) is satisfied with the triplet $(h,\rho^*,\gamma^*)$, and furthermore if $h$ satisfies
\begin{align}\label{h_inf}
\sup_{\gamma\in\Gamma}\lim_{t \to \infty}\frac{E_z^\gamma[h(Z_t)] }{t}=0, \quad \forall z\in\mathcal{Z}
\end{align}
then $\gamma^*$ is an optimal policy for the POMDP under the infinite horizon average cost optimality criterion, and 
\begin{align*}
J^*(z)=\inf_{\gamma\in\Gamma}J(z,\gamma)=\rho^* \quad \forall z\in \mathcal{Z}.
\end{align*}
We will refer to the function $h$, the relative value function, for the rest of the paper. Note that there may not be a unique relative value function $h$ that satisfies the ACOE, however, any $h$ that satisfies the ACOE and the condition (\ref{h_inf}) can be used for optimality analysis.

\subsection{Statement of the Main Result}
Now we state the main result of our paper.
\begin{assumption}\label{main_assumption}
\noindent
\begin{enumerate}
\item \label{compactness}
$U$ is a compact space and $(\X, d)$ is a compact metric space 
with diameter $D$ (where $D=\sup_{x,y \in \mathbb{X}} d(x,y)$).
\item \label{totalvar}
The transition probability $\sT(\cdot \mid x, u)$ is 
continuous in total variation in $(x, u)$, i.e., 
for any $\left(x_n, u_n\right) \rightarrow(x, u), 
\sT\left(\cdot \mid x_n, u_n\right) \rightarrow 
\sT(\cdot \mid x, u)$ in total variation.
\item \label{regularity}
There exists 
$\alpha \in R^{+}$such that 
$$
\left\|\mathcal{T}(\cdot \mid x, u)-\mathcal{T}\left(\cdot \mid x^{\prime}, u\right)\right\|_{T V} \leq \alpha d(x, x^{\prime})
$$
for every $x,x' \in \mathbb{X}$, $u \in \mathbb{U}$.
\item \label{CostLipschitz}
There exists $K_1 \in \mathbb{R}^+$ such that
\[|c(x,u) - c(x',u)| \leq K_1 d(x,x').\]
for every $x,x' \in \mathbb{X}$, $u \in \mathbb{U}$.
\item The cost function $c$ is continuous, and thus bounded since $\mathbb{X}$ is assumed to be compact.
\item \label{dobr_coef}
$$K_2:=\frac{\alpha D (3-2\delta(Q))}{2} < 1,$$
where $\delta(Q)$ is defined as in Definition \ref{Dobrushincoefficient}.
\end{enumerate}
\end{assumption}
We thus state the following main theorem.
\begin{theorem}\label{main} 
    Under Assumption \ref{main_assumption}, 
    a solution to the average cost optimality 
    equation (ACOE) exists. 
    This leads to the existence of an optimal 
    control policy, and optimal cost is constant for 
    every initial state.
\end{theorem}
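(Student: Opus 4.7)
The plan is to employ the vanishing discount approach on the belief MDP $(\Z,\U,\tilde c,\eta)$. For each $\beta\in(0,1)$ the discounted value function $J_\beta^*$ satisfies the discounted Bellman equation
\[
J_\beta^*(z)=\inf_{u\in\U}\Bigl\{\tilde c(z,u)+\beta\int J_\beta^*(z_1)\,\eta(dz_1\mid z,u)\Bigr\}.
\]
Fixing a reference point $z_0\in\Z$, I introduce $h_\beta(z):=J_\beta^*(z)-J_\beta^*(z_0)$ and $\rho_\beta:=(1-\beta)J_\beta^*(z_0)$, which jointly satisfy
\[
h_\beta(z)+\rho_\beta=\inf_{u\in\U}\Bigl\{\tilde c(z,u)+\beta\int h_\beta(z_1)\,\eta(dz_1\mid z,u)\Bigr\}.
\]
The goal is to extract limits $h_{\beta_n}\to h$ and $\rho_{\beta_n}\to\rho^*$ along a subsequence $\beta_n\uparrow 1$ and pass to the limit to recover the ACOE.

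The heart of the argument is a uniform-in-$\beta$ Lipschitz estimate on $h_\beta$. By Assumption \ref{main_assumption}(\ref{CostLipschitz}) and the Kantorovich duality, the reduced cost $\tilde c(z,u)=\int c(x,u)\,z(dx)$ is $K_1$-Lipschitz in $z$ under $W_1$. Invoking Theorem \ref{ergodicity} then yields a one-step contraction of the filter kernel $\eta$ in the $W_1$ sense, with coefficient $K_2=\alpha D(3-2\delta(Q))/2<1$ (Assumption \ref{main_assumption}(\ref{dobr_coef})). Using an optimal stationary selector for one initial belief as a comparison policy for a second initial belief, coupling the two filter trajectories action-by-action and applying the contraction iteratively gives
\[
|J_\beta^*(z)-J_\beta^*(z')|\le\sum_{t=0}^{\infty}\beta^t K_1 K_2^{t}\,W_1(z,z')\le\frac{K_1}{1-K_2}\,W_1(z,z'),
\]
uniformly in $\beta\in(0,1)$. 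Since $(\X,d)$ is compact with diameter $D$, so is $\Z$ under $W_1$, whence $\{h_\beta\}$ is uniformly bounded by $K_1 D/(1-K_2)$ and uniformly Lipschitz on $\Z$. The trivial bound $|J_\beta^*|\le\|c\|_\infty/(1-\beta)$ gives $|\rho_\beta|\le\|c\|_\infty$.

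By Arzel\`a--Ascoli on the compact metric space $\Z$, a subsequence $h_{\beta_n}$ converges uniformly to a continuous function $h$, and along a further subsequence $\rho_{\beta_n}\to\rho^*\in\R$. Assumption \ref{main_assumption}(\ref{totalvar}) together with the construction of $\eta$ yields continuity in $u$ of $\int h(z_1)\eta(dz_1\mid z,u)$, so combined with uniform convergence $h_{\beta_n}\to h$ the infimum can be passed through the limit, giving the ACOE. A measurable selection theorem applied on the compact $\U$ produces the stationary selector $\gamma^*$. Finally, uniform boundedness of $h$ trivially verifies (\ref{h_inf}), so the verification theorem (e.g., \cite[Theorem 5.2.4]{HernandezLermaMCP}) delivers both the optimality of $\gamma^*$ and that $J^*(z)=\rho^*$ is constant in $z$.

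The main obstacle is the uniform Lipschitz estimate on $h_\beta$. Without filter contraction, the naive iteration of Bellman's equation produces $(1-\beta)^{-1}$ blow-up as $\beta\uparrow 1$; what rescues the argument is the strict bound $1-\beta K_2\ge 1-K_2>0$ in Assumption \ref{main_assumption}(\ref{dobr_coef}), which keeps the coupling geometric series bounded independently of $\beta$. Making the coupling rigorous -- specifically, realizing trajectories of the filter process from two initial beliefs under a common policy on a single probability space whose $W_1$ distance contracts at rate $K_2$ per step -- is where Theorem \ref{ergodicity} does the heavy lifting.
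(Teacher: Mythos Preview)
Your proof follows the same vanishing-discount route as the paper: obtain a uniform-in-$\beta$ Lipschitz bound on $J_\beta^*$ via the filter contraction of Theorem \ref{ergodicity}, apply Arzel\`a--Ascoli on the compact belief space, and pass to the limit in the discounted Bellman equation (the paper packages the latter two steps as verification of Assumption \ref{lect} and application of Lemma \ref{exisACOE}). The only substantive difference is how the Lipschitz estimate is argued. The paper iterates the Bellman operator (Lemma \ref{keyRegLemF}): if $f\in\operatorname{Lip}(\Z,k)$ then $Tf\in\operatorname{Lip}(\Z,K_1+\beta K_2 k)$, whence the fixed point lies in $\operatorname{Lip}(\Z,K_1/(1-\beta K_2))$. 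You propose instead a trajectory-coupling argument. This works, but not quite as phrased: under a common stationary Markov selector $\gamma^*$, the two belief processes take \emph{different} actions $\gamma^*(Z_t)\neq\gamma^*(Z'_t)$, so Theorem \ref{ergodicity} (which fixes $u$) does not apply step-by-step. The repair is to drive the $z'$-process with the actions $U_t=\gamma^*(Z_t)$ of the $z$-process (an admissible randomized policy from the viewpoint of $z'$) and couple $\eta(\cdot\mid Z_t,U_t)$ with $\eta(\cdot\mid Z'_t,U_t)$ optimally at each step; then $E[W_1(Z_t,Z'_t)]\le K_2^{\,t} W_1(z,z')$ and your displayed geometric sum follows. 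The paper's operator-iteration argument sidesteps this detour entirely.
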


As can be seen, the testability/verification of 
these criteria is explicit. We provide three examples to illustrate this, 
one in a discrete setting and the others
in continuous setting.

As we will discuss in more detail in Section 
\ref{impl}, 
the theorem provides significant implications 
for robustness and approximations. By utilizing this 
theorem, we can derive near-optimal policies 
for the average cost criteria in partially 
observable Markov decision processes. 
To the best of our knowledge, obtaining 
near-optimal policies for the average cost 
criteria is a novel result in the literature
on Partially Observable Markov Decision Processes (POMDPs). 
Additionally, under certain conditions, our 
findings contribute a new perspective to the 
literature on robustness, demonstrating that 
the average cost optimization problem is 
completely robust to initialization errors.

\subsection{Examples}
The first example will be for the discrete case.
For the case with finite $\X$, consider 
the discrete metric $d$ 
defined as follows:
$$d\left(x, x^{\prime}\right)= \begin{cases}1 & \text { if } x \neq x^{\prime} \\ 0 & \text { if } x=x^{\prime}.\end{cases}$$
With this choice of metric, the diameter $D$ is equal to $1$.
\begin{example}\label{ex1}
        Let $\X=\{0, 1, 2, 3\}$, $\Y=\{0, 1\}$,
        $\U=\{0, 1\}$, $\epsilon \in (0,1/2)$, and
        let $c$ be any function from $X \times U$ to $R^+$. 
        Now, consider the transition and measurement matrices given by:
        \begin{align*}
        \sT_0=\left(\begin{array}{cccc}
        1/2 & 1/3 & 1/6 & 0 \\
        0 & 1/2 & 1/6 & 1/3 \\
        1/2 & 1/6 & 0 & 1/3 \\
        1/3 & 1/3 & 1/3 & 0
        \end{array}\right)
        \quad 
        Q=\left(\begin{array}{ccc}
        3/4-\epsilon & 1/4+\epsilon\\
        3/4-\epsilon & 1/4+\epsilon \\
        1/4+\epsilon & 3/4-\epsilon\\
        1/4+\epsilon & 3/4-\epsilon
        \end{array}\right),
        \end{align*}
        \begin{align*}
            \sT_1=\left(\begin{array}{cccc}
            1/3 & 1/2 & 1/6 & 0 \\
            0 & 1/3 & 1/2 & 1/6 \\
            1/2 & 1/3 & 0 & 1/6 \\
            1/3 & 1/3 & 1/3 & 0
            \end{array}\right)
        \end{align*}
\end{example}
For this example, please note that 
$\delta(Q)$ is greater than $1/2$, and the diameter 
$D$ is equal to 1. We can choose 
$\alpha$ to be 1. Hence, according to Theorem \ref{main}, 
the Average Cost Optimal Equation (ACOE) has a solution.

Alternatively, if we can select $\alpha$ to be less than 1, a more relaxed condition on 
$\delta(Q)$ would suffice. If we can choose 
$\alpha$ to be less than $2/3$, 
without any other constraints on the observation kernel, 
we can assert that the Average Cost Optimal Equation (ACOE) 
has a solution. As we will see in the literature review section,
these conditions are new.

The second example will be for the continuous state space case.

\begin{example}
    Let $\X=[0, 2]$, $\Y=\{0, 1\}$,
        $\U=[0, 12]$,
        let the transition kernel
        $\T(.\mid x,u)=\mathrm{Unif}(0,min(2,1+((x+u)/7)))$, where 
        $\mathrm{Unif}$ stands for uniform distribution,
        let measurement kernel $Q(x)=\lfloor x \rfloor$ and 
        let cost function $c(x,u)=x+u$.
\end{example}
        For this example, 
        we observe that $\delta(Q)=0$. 
        Furthermore, considering any control input 
        $u$ in the set $\U$ and any states $x$ and $x'$ 
        within the space $\X$ such that $x' < x$, 
        we can derive the following bound:
$$
\begin{aligned}
&\left\| \T(\cdot | x, u) - \T(\cdot | x', u) \right\|_{TV} \\
&\leq \left\| \mathrm{Unif}(0,1+(x'+u)/7)-\mathrm{Unif}(0,1+(x+u)/7) \right\|_{TV}\\
&=2\left( 1 - \frac{1 + \frac{x'+u}{7}}{1+\frac{x + u }{7}}\right) \\
&= 2\frac{x - x'}{x + u + 7} \\
& < \frac{2}{7} d(x, x').
\end{aligned}
$$

We have $D = \sup d(x,x') = 2$, 
and by choosing $\alpha = \frac{2}{7}$ and $K_1 = 1$, 
we can set $K_2$ to $6/7$. 
This choice ensures that we satisfy 
the conditions outlined in Assumption 
\ref{main_assumption}. Consequently, 
we can directly apply Theorem \ref{main} 
to this specific example.

\begin{example}\label{ContE}
Consider $\X=[0,1]$, $\U=[-p,p]$ and the cost function $c(x,u)=x-u$. Define the transition kernel $\T(.|x,u)=\bar{N}(x+u,\sigma^2)$.
Here, $\bar{N}(\mu,\sigma^2)$ denote truncated version 
of $N(\mu,\sigma^2)$, where the support is restricted to $[0,1] \subset \mathbb{R}$.
Its probability density function $f$ is given by
$$f(x ; \mu, \sigma)=\frac{1}{\sigma} 
\frac{\varphi\left(\frac{x-\mu}{\sigma}\right)}
{\Phi\left(\frac{1-\mu}{\sigma}\right)-\Phi\left(\frac{0-\mu}{\sigma}\right)}$$
Here, $\varphi(\cdot)$ is the probability density function 
of the standard normal distribution and 
$\Phi(\cdot)$ is its cumulative distribution function.

For any $0\leq x < y \leq 1$:
$
\frac{\lVert \T(.|y,u)-\T(.|x,u) \rVert _{TV}}{y-x}\leq
\frac{\sqrt{2}}{\sigma \sqrt{\pi}}.
$
This shows that the transition kernel $\T$ satisfies Assumption \ref{main_assumption}-\ref{regularity} with $\alpha = \frac{\sqrt{2}}{\sigma \sqrt{\pi}}$.
If we choose $\sigma>3/\sqrt{2\pi}$, Assumption \ref{main_assumption}-\ref{dobr_coef} is satisfied for every observation channel. 
Under this condition, by Theorem \ref{main}, the 
ACOE has a solution.
\end{example}

\subsection{Literature Review and Comparison}

In the following we present a detailed literature 
review and also provide a 
comparison with our results. 
We will in particular highlight that
 many of the results in the 
 literature are not easy to verify, 
 which also serves to demonstrate that the problem is a challenging one. 

References \cite{platzman1980optimal,fernandez1990remarks,runggaldier1994approximations,hsu2006existence} 
study the average-cost control problem under 
the assumption that the state space is finite; 
they provide reachability type conditions for the 
belief kernels. Reference \cite{hsu2006existence} also provides a detailed literature review. An additional line of argument, via simulation and coupling, was introduced in \cite{Bor00,Bor03,borkar2004further,Bor07}. Under certain strong continuity requirements 
of the transition kernel, \cite{StettnerACOE2019} 
extends the positivity condition to Polish spaces.
We also note that, via the weak Feller property of non-linear filters, 
the convex analytic methods can also be utilized \cite[Theorem 1.2]{anotherLookPOMDPs}, 
though the dependence on the initial condition is a limitation.


Borkar and Budhiraja \cite{borkar2004further} consider $\X$, $\Y$, and $\U$ as Polish spaces, 
where $\X$ is a finite-dimensional Euclidean space (not necessarily compact, unlike ours)
and $\U$ is a compact space. In their setup, 
unlike ours, the observation measurement 
depends on the previous time's state and action. For $(x, u) \in \mathbb{X} \times \mathbb{U}$, 
the transition probability measure 
$p(x, u, dz, dy) \in \Pf(\mathbb{X} \times \mathbb{Y})$ 
is defined.  \cite{borkar2004further} assumes that $p$ is continuous in a strong sense: Let $\lambda$ denote the Lebesgue measure on $\mathbb{X}$ and let $\eta \in \Pf(\mathbb{Y})$, assume a density 
function $\varphi(x, u, z, y)$ on 
$\mathbb{X} \times \mathbb{U} \times \mathbb{X} \times \mathbb{Y}$ 
such that $p(x, u, dz, dy) = \varphi(x, u, z, y) \lambda(dz) \eta(dy)$, 
with $\varphi(\cdot) > 0$. It is assumed that $\varphi(x, u, z, y)$ is continuous. Note that by Scheff\'e's lemma, this implies that the transition kernel as well as the measurement kernel are continuous in total variation. The update rule is then given as
\begin{align*}
    Pr\left(X_{n+1} \in A, Y_{n+1} \in A^{\prime} \mid X_{[0,n]},Y_{[0,n]},U_{[0,n]} \right) \\
    =\int_{A^{\prime}} \int_A \varphi\left(X_n, U_n, z, y\right) \lambda(\mathrm{d} z) \eta(\mathrm{d} y).
\end{align*}
Furthermore, to facilitate a coupling argument, \cite{borkar2004further} additionally assumes
\begin{align}
\int \bar{\varphi}\left(x, u, x^{\prime}
\right)\left(\frac{\varphi\left(x, u, x^{\prime},
 y\right)}{\bar{\varphi}\left(x, u, x^{\prime}\right)}
 \right)^{1+\varepsilon_0} \lambda\left(d x^{\prime}\right) 
 \eta(d y)<\infty \label{bound123}
\end{align}
for some $\varepsilon_0 > 0$, where 
$\bar{\varphi}(x, u, x') = \int \varphi(x, u, x', y) \eta(dy)$.
Finally, the following Lyapunov-type 
assumption is assumed (which always holds when 
$\X$ is compact).
\begin{assumption}\label{Borkar0}
There exist $\V \in \C(\X)$ and
$\hat{\V} \in \C(\X)$ 
satisfying
$\lim _{\|x\| \rightarrow \infty} \V(x)=\infty$ and
$\lim _{\|x\| \rightarrow \infty} 
\hat{\V}(x)=\infty$, 
and under any
wide sense admissible $\left\{Z_n\right\}$,
\begin{align*}
E\left[\hat{\V}\left(X_{n+1}\right) \mid \F_n\right]-\hat{\V}\left(X_n\right) 
\leq-\V\left(X_n\right)+\hat{C} I_{\hat{B}}\left(X_n\right),
\end{align*}
\begin{align*}
    \limsup _{n \rightarrow \infty} \frac{E\left[\hat{\V}\left(X_n\right)\right]}{n}=0,
\end{align*}
where $\hat{C}>0$,
$\hat{B} = \{x \in S:\|x\| \leq \hat{R}\}$ 
for some $\hat{R}>0$ and 
$\F_n=\sigma(X_{[0,n]},Y_{[0,n]},U_{[0,n]})$.
\end{assumption}
\begin{theorem}[Theorem 4.1 of \cite{borkar2004further}] 
    For the system described above and under the 
    mentioned assumptions as well as Assumption \ref{Borkar0}, 
    a solution to the average cost optimality equation 
    (ACOE) exists. This implies the existence of an 
    optimal control policy, and the optimal cost remains 
    constant for every initial state.
\end{theorem}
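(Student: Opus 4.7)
The plan is to prove Theorem \ref{main} via the vanishing discount approach, with the Wasserstein regularity of the belief kernel from Theorem \ref{ergodicity} supplying the contractivity that makes this approach viable despite the partial observation structure.

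First, I would establish a uniform Lipschitz estimate on the discounted value functions $J_\beta^*\colon \Z \to \R$ with respect to $W_1$, with Lipschitz constant independent of $\beta \in (0,1)$. From (\ref{tildecost}) and Assumption \ref{main_assumption}.\ref{CostLipschitz}, the reduced one-stage cost $\tilde{c}(z,u) = \int c(x,u)z(dx)$ is $K_1$-Lipschitz in $z$ with respect to $W_1$, uniformly in $u \in \U$, by Kantorovich--Rubinstein duality. Theorem \ref{ergodicity} supplies, in Kantorovich--Rubinstein dual form, the companion estimate: for every $W_1$-Lipschitz $f:\Z\to\R$ and every $u\in\U$, the map $z \mapsto \int f(z_1)\,\eta(dz_1\mid z,u)$ is Lipschitz on $(\Z,W_1)$ with constant at most $K_2 \cdot \Lip(f)$. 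Substituting into the discounted Bellman operator shows that it sends $L$-Lipschitz functions to $(K_1 + \beta K_2 L)$-Lipschitz functions; iterating from $J_\beta^{(0)} \equiv 0$ and using $K_2 < 1$, the fixed point $J_\beta^*$ is $K_1/(1-K_2)$-Lipschitz on $(\Z, W_1)$, uniformly in $\beta$.

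Second, I would run the vanishing discount argument. Fix $z_0 \in \Z$ and set $h_\beta(z) := J_\beta^*(z) - J_\beta^*(z_0)$. The uniform Lipschitz bound together with $W_1(z,z') \leq D$ on $\Z$ (since $(\X,d)$ has diameter $D$) implies $\{h_\beta\}_{\beta<1}$ is uniformly bounded by $K_1 D/(1-K_2)$ and equi-Lipschitz. Continuity of $c$ on the compact set $\X\times\U$ yields $\|c\|_\infty < \infty$, so $|(1-\beta)J_\beta^*(z_0)| \leq \|c\|_\infty$. Since compactness of $\X$ makes $(\Z,W_1)$ a compact metric space, Arzel\`a--Ascoli and a diagonal extraction provide $\beta_k\uparrow 1$ with $h_{\beta_k}\to h$ uniformly on $\Z$ and $(1-\beta_k)J_{\beta_k}^*(z_0)\to \rho^*$ for some $\rho^* \in \R$. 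Third, I would pass to the limit in the discounted Bellman equation to recover the ACOE (\ref{acoe}) for $(h,\rho^*)$. To justify exchanging the infimum with the limit and to produce a measurable selector $\gamma^*$, I would verify joint continuity in $u$ of $\tilde{c}(z,u) + \int h(z_1)\eta(dz_1\mid z,u)$: continuity of $\tilde{c}(z,\cdot)$ follows from boundedness of $c$ and Assumption \ref{main_assumption}.\ref{totalvar}, while weak continuity of $u\mapsto \eta(\cdot|z,u)$ on $\Pf(\Z)$ propagates from Assumption \ref{main_assumption}.\ref{totalvar} through the filtering recursion (\ref{conteta}). A measurable selection theorem then furnishes $\gamma^*$; since $h$ is bounded, (\ref{h_inf}) holds trivially, so $\gamma^*$ is optimal and $J^*(z)=\rho^*$ is constant.

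The main obstacle is the first step: the contraction has to be performed in the Kantorovich--Rubinstein dual formulation, because $\eta(\cdot|z,u)\in\Pf(\Z)$ whereas the desired Lipschitz estimate lives on $\Z$ itself. The content of Theorem \ref{ergodicity} is precisely to convert a $W_1$-Lipschitz test function on $\Z$ into a function of $z$ that is Lipschitz with the explicit constant $K_2 = \alpha D(3-2\delta(Q))/2$; this constant emerges from a coupling argument combining the $\alpha$-Lipschitzness of $\sT$ in $W_1$ (Assumption \ref{main_assumption}.\ref{regularity}) with filter contraction governed by the Dobrushin coefficient $\delta(Q)$. Once $K_2 < 1$ is in hand, the remaining vanishing-discount machinery is classical.
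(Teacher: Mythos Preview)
You are proving the wrong statement. The theorem you were asked to address is a \emph{cited} result from \cite{borkar2004further}, appearing in the literature review; the paper does not prove it and offers no proof to compare against. Its hypotheses are entirely different from those of Theorem~\ref{main}: $\X$ is a (possibly non-compact) Euclidean space, the transition-and-observation law is given through a strictly positive joint density $\varphi$ satisfying the moment bound (\ref{bound123}), and stability is imposed via the Lyapunov condition of Assumption~\ref{Borkar0}. None of Assumption~\ref{main_assumption}.\ref{regularity}, \ref{main_assumption}.\ref{CostLipschitz}, or \ref{main_assumption}.\ref{dobr_coef} is available in that setting, so the Wasserstein contraction of Theorem~\ref{ergodicity} (which your argument hinges on through the constant $K_2<1$) simply cannot be invoked, and compactness of $(\Z,W_1)$---which you need for Arzel\`a--Ascoli---also fails in general.

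What you have written is, in fact, a correct outline of the paper's own proof of Theorem~\ref{main}: the uniform $K_1/(1-K_2)$ Lipschitz bound on $J_\beta^*$ (Lemma~\ref{keyRegLemF}), the vanishing discount argument with equicontinuity and boundedness of $h_\beta$, Arzel\`a--Ascoli on compact $\Z$, and passage to the limit in the Bellman equation via weak Feller continuity (Theorem~\ref{weakFeller}) are exactly the steps the paper takes. But this argument establishes Theorem~\ref{main}, not the Borkar--Budhiraja theorem you were asked about.
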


Compared to this result, we present complementary 
conditions which are explicit and testable. 
Additionally, for the compact setup, we do not have continuity assumptions on the measurement kernels, and the condition (\ref{bound123}) is not needed.  
Our paper looks to be the first one where 
contraction properties of filter kernels is utilized. 

In \cite{platzman1980optimal}, 
Platzman addresses the finite-state 
average-cost control problem with finite state, 
observation, and action spaces, under restrictive reachability, subrectangularity, and detectability conditions.

Runggaldier and Stettner in \cite{runggaldier1994approximations} consider $\X$ and $\Y$ as finite and $\U$ as compact. They prove that under 
the following positivity condition \cite{runggaldier1994approximations}, ACOE has a bounded solution:
\begin{assumption}[\cite{runggaldier1994approximations}]\label{Stettner}
\[\inf_{i, j \in \X} \inf_{u, u^{\prime} \in \U} \inf_{\left\{C \in \B(\X):[\T(C|i, u)]>0\right\}} \frac{\T\left(C|j,u^{\prime}\right)}{T(C|i, u)} > 0.\]
\end{assumption}

We note that Example \ref{ex1} does not meet this condition.

Borkar \cite{Bor00} employs a coupling argument under the following assumption.

\begin{assumption}[\cite{Bor00} Assumption A]\label{Borkar0coupling}
    There exist constants $K_0 \in \mathbb{R}^+$ and $\delta \in (0, 1)$ such that $\sup_{i, j} \sup_{\gamma} P(\tau > n | X'_0 = i, X_0 = j) \leq K_0 \delta^n$ holds for all $n \geq 0$.
    This supremum is taken over all wide-sense admissible policies, and 
    $\tau$ represents the coupling time, i.e., 
    $\tau=\min \{n:X'_n =X_n\}$.
\end{assumption}
However, verifying this assumption is not simple due to the necessity of taking the supremum over all wide-sense admissible policies (which is a strict superset of deterministic admissible policies).

Hsu, Chuang, and Arapostathis \cite{hsu2006existence} consider
a finite $\mathbb{X}$ and $\mathbb{Y}$ and a compact $\mathbb{U}$. 
They provide two different sets of assumptions under which ACOE has a solution.


\begin{assumption}[Assumption 2 of \cite{hsu2006existence}]\label{Ari1}
$\mathcal{Z}_{\epsilon} = \{\mu \in \mathcal{Z}: \mu(x) > \epsilon \quad \forall x \in \X\}$. 
There exist constants $\epsilon > 0$, $k_0 \in \mathbb{N}$, and $\alpha < 1$ such that if $z_*(\beta) \in \arg \min_{z \in \Z} J^*_{\beta}(z)$, then for each $\beta \in [\alpha, 1)$ we have
$$
\max_{1 \leq k \leq k_0} \mathbb{P}_{z_*(\beta)}^{\gamma_{\beta}}(Z_k \in \mathcal{Z}_{\epsilon}) \geq \epsilon,
$$
where $Z_k$ is the filter process and $\gamma_{\beta}$ is the optimal policy for the $\beta$-discounted horizon cost problem.
\end{assumption}
Because checking this assumption can be quite complex, an alternative easier-to-verify assumption is provided

\begin{assumption}[\cite{hsu2006existence}]\label{Ari5}
    There exist $k \geq 1$, and $\Delta > 0$ such that, for all admissible $\{U_t\}$
    $$
    \mathbb{P}(X_k = j | X_0 = i, U_{t-1}, Y_t, 1 \leq t \leq k) \geq \Delta \text{ for all } i, j \in \X.
    $$
\end{assumption}

For the second set of assumptions, 
the conditions are relaxed to some extent 
when the cost function $c$ is continuous.
%
%

\begin{assumption}[\cite{hsu2006existence}]\label{Ari4}
    There exist $k \geq 1$, and $\Delta>0$ such that, 
    for all $y^k \in \Y^k$ and $u^k \in \U^k$,
        $$
        P_{i j}\left(y^k|u^k\right) \geq \Delta 
        \sum_{\ell \in \X} P_{\ell j}\left(y^k| u^k\right) \quad \forall i, j \in \X,
        $$
        where 
        $P_{i j}\left(y^k \mid u^k\right)=\mathbb{P}\left(X_k=j, Y_{[1, k]}=y^k \mid X_0=i, U_{[0, k-1]}=u^k\right)$.
\end{assumption}

\begin{theorem}[Theorem 11 of \cite{hsu2006existence}] 
    Under either Assumption \ref{Ari1} or Assumption \ref{Ari4}, 
    the Average Cost Optimal Equation (ACOE) possesses a bounded solution.
\end{theorem}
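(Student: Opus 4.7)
The plan is a vanishing-discount argument, with the Wasserstein contraction of the filter kernel from Theorem \ref{ergodicity} as the essential ingredient. For each $\beta \in (0,1)$ the belief-MDP $(\Z, \U, \tilde c, \eta)$ has bounded one-stage cost and compact state space $\Z = \mathcal{P}(\X)$ (compactness in $W_1$ following from compactness of $\X$), so the discounted value function $J_\beta^*$ is well-defined and satisfies the discounted dynamic programming equation. I would fix a reference belief $z_0 \in \Z$, set $h_\beta(z) := J_\beta^*(z) - J_\beta^*(z_0)$, then extract a limit $h$ along $\beta_n \uparrow 1$, pass to the limit in the Bellman equation, and verify the growth condition (\ref{h_inf}).

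The first key step is a uniform Lipschitz estimate on $J_\beta^*$ in $W_1$. The one-stage cost $\tilde c$ inherits Lipschitz continuity in $z$ with constant $K_1$ directly from Assumption \ref{main_assumption}(\ref{CostLipschitz}), since $|\tilde c(z,u)-\tilde c(z',u)| = |\int c(x,u)(z-z')(dx)|$ is dominated by the $W_1$ distance when $c$ is $K_1$-Lipschitz. Combined with a contraction estimate of the form $W_1(\eta(\cdot \mid z_1,u), \eta(\cdot\mid z_2,u)) \le K_2\, W_1(z_1,z_2)$ supplied by Theorem \ref{ergodicity}, a standard contraction-mapping argument on the space of $L$-Lipschitz functions shows that the discounted Bellman operator maps $\Lip(\Z, L)$ into itself as long as $L \ge K_1/(1-\beta K_2)$. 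Since $K_2 < 1$ by Assumption \ref{main_assumption}(\ref{dobr_coef}), the resulting Lipschitz constant is bounded uniformly in $\beta$ by $L^* := K_1/(1-K_2)$. Because the $W_1$-diameter of $\Z$ is at most $D$, the centered family $\{h_\beta\}$ is uniformly bounded by $L^* D$ and equicontinuous on the compact space $(\Z, W_1)$. Arzelà-Ascoli then delivers $\beta_n \uparrow 1$ along which $h_{\beta_n} \to h$ uniformly, and boundedness of $(1-\beta)J_\beta^*(z_0)$ lets me further extract $(1-\beta_n) J_{\beta_n}^*(z_0) \to \rho^*$ for some constant $\rho^*$.

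Rewriting the discounted equation as
\[
h_\beta(z) + (1-\beta)J_\beta^*(z_0) = \inf_{u \in \U}\Bigl\{\tilde c(z,u) + \beta \int h_\beta(z')\,\eta(dz'\mid z,u)\Bigr\},
\]
I would pass to the limit along $\beta_n$ to recover the ACOE (\ref{acoe}). A measurable selection theorem on the compact $\U$ then yields $\gamma^*$, and since $h$ is bounded the growth condition (\ref{h_inf}) holds trivially, so the verification result cited in Subsection \ref{acoe_sec} gives a constant optimal cost $J^*(z) \equiv \rho^*$. The main obstacle I anticipate is justifying the exchange of the infimum and the limit in $\beta_n$: this requires joint continuity of $(z,u) \mapsto \int h(z')\eta(dz'\mid z,u)$, which I would derive from $W_1$-Lipschitz continuity of the integrand in $z$ (from the contraction) together with continuity of $\eta$ in $u$ coming from the total-variation continuity of $\sT$ in Assumption \ref{main_assumption}(\ref{totalvar}) and standard (weak-Feller) continuity results for non-linear filters. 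Beyond this technical point, the genuine engine of the proof is Theorem \ref{ergodicity} itself, and the sharp form $K_2 = \alpha D(3-2\delta(Q))/2$ is precisely what couples the filter-contraction analysis to the explicit, testable conditions on the model data.
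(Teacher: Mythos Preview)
Your proposal does not address the stated theorem. The statement in question is Theorem~11 of \cite{hsu2006existence}, quoted in the literature review: it asserts that the ACOE has a bounded solution under either Assumption~\ref{Ari1} (a reachability condition on the set $\mathcal{Z}_\epsilon$ of strictly positive beliefs under the $\beta$-optimal policy) or Assumption~\ref{Ari4} (a lower-bound condition on the joint transition-observation probabilities $P_{ij}(y^k\mid u^k)$), in the setting where $\X$ and $\Y$ are finite. The paper does not prove this result; it is cited as prior work for comparison.

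What you have written is, in substance, a sketch of the proof of Theorem~\ref{main} of the present paper: you invoke Assumption~\ref{main_assumption}, the Wasserstein contraction of Theorem~\ref{ergodicity}, and the constant $K_2=\alpha D(3-2\delta(Q))/2$. None of these objects appear in the hypotheses of the cited theorem. Assumptions~\ref{Ari1} and~\ref{Ari4} say nothing about Lipschitz regularity of $\mathcal{T}$, the Dobrushin coefficient of $Q$, or compactness of $\X$ beyond finiteness, so your uniform Lipschitz bound $K_1/(1-K_2)$ has no logical standing under those hypotheses. Conversely, your argument never uses the positivity or reachability information that Assumptions~\ref{Ari1} and~\ref{Ari4} encode. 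You have proved the wrong theorem.
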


Finally, Stettner, in \cite{StettnerACOE2019}, extends 
the results of \cite{runggaldier1994approximations} 
to Polish state spaces and observation spaces, 
encompassing both nondegenerate and degenerate observations: For $u_1, u_2 \in \U$ and $\mu, \nu \in P(\X)$ 
define $$\lambda\left(u_1, u_2, \mu, \nu\right):=\inf _{\{C: \T(C|\mu, u_1)>0\}} \frac{\T(C|\nu, u_2)}{T(C|\mu, u_1)}.$$ 
Then define $\lambda(\mu, \nu)=\inf _{u \in U} \lambda(u, u, \mu, \nu)$.

\begin{assumption}[\cite{StettnerACOE2019}]\label{Stettner2}
$\lambda\left(u^1, u_n^2, \mu, \nu_n\right) \rightarrow 1$ and 
$\lambda\left(u_n^2, u^1, \nu_n, \mu\right) \rightarrow 1$ when 
$u_n^2 \rightarrow u^1$ and $\nu_n \Rightarrow \mu$.
Similarly, assume that both $\lambda\left(\nu_n, \mu\right)$ and $\lambda\left(\mu, \nu_n\right)$ converge to 1 as $\nu_n \Rightarrow \mu$
(where $\Rightarrow$ denotes weak convergence of probability measures).
\end{assumption}

\begin{theorem}[Theorem 5.6 of \cite{StettnerACOE2019}]
    Let $\X$ and $\Y$ be Polish spaces, $\U$ be a 
    compact space, and $c$ be a continuous and bounded function. 
    Under Assumptions \ref{Stettner} and \ref{Stettner2}, 
    the Average Cost Optimality Equation (ACOE) admits a bounded solution.
\end{theorem}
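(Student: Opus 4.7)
The plan is to establish the ACOE via the classical vanishing-discount procedure on the belief MDP $(\Z, \U, \tilde c, \eta)$. For each $\beta\in(0,1)$, let $V_\beta$ denote the $\beta$-discounted value function on $\Z$, satisfying the discounted Bellman equation $V_\beta(z) = \inf_{u \in \U}\{\tilde c(z,u) + \beta \int V_\beta(z')\,\eta(dz'\mid z,u)\}$. Fix a reference belief $z_*\in \Z$ and set $h_\beta(z) := V_\beta(z) - V_\beta(z_*)$ and $\rho_\beta := (1-\beta)V_\beta(z_*)$. The goal is to extract a subsequence $\beta_n\uparrow 1$ along which $\rho_{\beta_n}\to\rho^*$ and $h_{\beta_n}\to h^*$, then verify that $(h^*,\rho^*)$ solves the ACOE \eqref{acoe}.

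The first main step is uniform boundedness, $\sup_{\beta<1}\sup_{z \in \Z}|h_\beta(z)|<\infty$. This is where Assumption \ref{Stettner} carries the essential load: the positivity condition yields a uniform Doeblin-type minorization $\T(\cdot\mid j,u')\geq \kappa\,\T(\cdot\mid i^*,u^*)$ by a common reference measure. I would transfer this bound to the joint state--observation process and then lift it to the level of beliefs via a coupling argument, in the spirit of \cite{Bor00,borkar2004further}: two copies of the POMDP, driven by arbitrary wide-sense admissible policies, can be synchronized with a geometrically tight coupling time $\tau$ constructed from the reference measure. Since $c$ is bounded, this bounds $|V_\beta(z)-V_\beta(z')|$ by a constant multiple of $\operatorname{osc}(c)\cdot E[\sum_{t<\tau}\beta^t]$, a quantity that is finite and independent of $\beta$. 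In this way the state-level positivity translates into span-boundedness on the belief MDP, despite $\eta$ being degenerate on the image of the filter update.

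The second main step is the passage to the limit. Assumption \ref{Stettner2} provides continuity of $\lambda$ in its arguments under weak convergence of beliefs and convergence of actions; a standard calculation then shows that the filter kernel $\eta(\cdot\mid z,u)$ is weakly continuous in $(z,u)$, and that the integral operator $(z,u)\mapsto \int V_\beta(z')\,\eta(dz'\mid z,u)$ is jointly continuous with a modulus that can be chosen independent of $\beta$. Combined with continuity of $\tilde c$ and the span bound from the first step, this yields equicontinuity of $\{h_\beta\}_{\beta<1}$ on compact subsets of $\Z$. An Arzel\`a--Ascoli type extraction, together with boundedness of $\rho_\beta$ from boundedness of $c$, produces a convergent subsequence $(h_{\beta_n},\rho_{\beta_n})\to(h^*,\rho^*)$. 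Passing to the limit inside the discounted Bellman equation, and invoking the weak Feller continuity of $\eta$ supplied by Assumption \ref{Stettner2}, delivers the ACOE with bounded relative value function $h^*$.

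The main obstacle is the span-boundedness step. The positivity hypothesis is stated at the level of the state-level kernel $\T$, while the vanishing-discount argument must be run on the belief MDP whose kernel $\eta$ is concentrated on the image of the filter update and does not itself admit any Doeblin-type minorization. The nontrivial content of \cite{StettnerACOE2019} is therefore the coupling construction that bridges state-level positivity to control over value-function differences on $\Z$, handled uniformly under both nondegenerate and degenerate observation channels with the help of the continuity in Assumption \ref{Stettner2}. Once this span bound is in place, the remaining steps are the standard vanishing-discount machinery adapted to the belief setting.
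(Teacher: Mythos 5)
This statement is not proved in the paper at all: it is Theorem 5.6 of \cite{StettnerACOE2019}, quoted verbatim in the literature-review subsection purely for comparison with the authors' own Theorem \ref{main}. There is therefore no in-paper proof to measure your attempt against; the only proof is in Stettner's paper. Judged on its own terms, your outline follows the right general template (vanishing discount on the belief MDP, span bound on $h_\beta$, equicontinuity, Arzel\`a--Ascoli, pass to the limit in the discounted Bellman equation), which is indeed the skeleton used both in \cite{StettnerACOE2019} and in the present paper's own Lemma \ref{exisACOE}.

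The genuine gap is the one you yourself flag and then do not close: the span bound. You propose to derive it by a Borkar-style coupling with geometrically tight coupling time, but Assumption \ref{Stettner} does not obviously deliver such a coupling at the level of the \emph{belief} process --- coupling the state/observation trajectories does not couple the filters, because the filter retains the prior; this is precisely the filter-stability issue, and Assumption \ref{Borkar0coupling} (which Borkar needs, over all wide-sense admissible policies) is a separate, hard-to-verify hypothesis that you cannot simply import. The actual Runggaldier--Stettner mechanism is different: the positivity ratio $\lambda$ gives mutual absolute continuity of the one-step predicted measures $\T(\cdot\mid\mu,u)$ with density ratio bounded in $[\delta,1/\delta]$ uniformly over beliefs and actions, and the span bound is obtained by a direct change-of-measure comparison of the continuation costs (this is also where Assumption \ref{Stettner2} enters, to handle degenerate observations and to get the continuity needed for the limit passage). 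Note also that $\eta(\cdot\mid z,u)$ itself admits no Doeblin minorization --- the measures $\eta(\cdot\mid z,u)$ and $\eta(\cdot\mid z',u')$ are typically mutually singular on $\Z$ --- so any argument phrased as a minorization of the belief kernel would fail outright. As written, your proposal is an accurate description of what a proof must accomplish, but it does not constitute a proof of the decisive step.
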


In \cite{yu2008near}, the authors study near optimality of finite window policies for average cost problems where the state, action and observation spaces are finite; under the condition that the liminf and limsup of the average cost are equal and independent of the initial state, the paper establishes the near-optimality of (non-stationary) finite memory policies. Here, a concavity argument building on \cite{Feinberg2} (which becomes consequential by the equality assumption) and the finiteness of the state space is crucial. The paper shows that for any given $\epsilon>0$, there exists an $\epsilon$-optimal finite window policy. 

With this review, we have both summarized some existing key studies, and also highlighted that our paper presents accessible and testable conditions, compared with most of the literature reviewed above. 

In our paper, we present a contraction based analysis for non-linear filters, which is a novel contribution and which we expect to have significant consequences for learning theoretic and approximation results.

\section{Proof of the Main Theorem}

Recently, \cite{kara2020near} presented the following regularity results for controlled filter processes:

\begin{theorem}[\cite{kara2020near}, Theorem 7-i, Theorem 7-iv]\label{kara2020near1}
Assume that $\mathbb{X}$ and $\mathbb{Y}$ are Polish spaces. 
\begin{enumerate}
    \item[i.] If Assumption \ref{main_assumption}-\ref{regularity} is fulfilled, then we have
    \[
    \rho_{B L}\left(\eta(\cdot \mid z, u), \eta\left(\cdot \mid z^{\prime}, u\right)\right) \leq 
    3\left(1+\alpha \right) \rho_{B L}\left(z, z^{\prime}\right)
    \]
    for any $z,z'\in {\mathcal P}(\mathbb{X})$ and $u\in \U$.
    \item[ii.] 
    Without any assumptions
    $$
        \rho_{BL}\left(\eta(\cdot|z,u),\eta(\cdot|z',u)\right)\leq (3-2\delta(Q))(1-\tilde{\delta}(\mathcal{T}))\|z-z'\|_{TV}
    $$
    for any 
    $z,z'\in {\mathcal P}(\mathbb{X})$ and $u\in \U$,
    where $\tilde{\delta}(\mathcal{T}):=\inf _{u \in \mathbb{U}} \delta(\mathcal{T}(\cdot \mid \cdot, u))$
    
\end{enumerate}
\end{theorem}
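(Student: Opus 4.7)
The plan is to test the filter transition kernel $\eta$ against an arbitrary $g:\mathcal{Z}\to\mathbb{R}$ with $\|g\|_{BL}\leq 1$, using the pushforward representation (\ref{conteta}), namely $\int g(z_1)\,\eta(dz_1\mid z,u)=\int g(F(z,u,y))\,P(dy\mid z,u)$. Adding and subtracting the hybrid term $\int g(F(z',u,y))\,P(dy\mid z,u)$ splits the difference into
\[ (A)\ \int\bigl[g(F(z,u,y))-g(F(z',u,y))\bigr]\,P(dy\mid z,u) \quad\text{and}\quad (B)\ \int g(F(z',u,y))\,\bigl[P(dy\mid z,u)-P(dy\mid z',u)\bigr]. \]
Taking the supremum over $g$ reduces both parts to bounding (B) by $\|P(\cdot|z,u)-P(\cdot|z',u)\|_{TV}$ (since $\|g\circ F\|_\infty\le 1$) and bounding (A) by an averaged $\rho_{BL}$-distance between the Bayes updates $F(z,u,\cdot)$ and $F(z',u,\cdot)$ integrated against $P(dy\mid z,u)$.

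For part (i), the first step is to write $P(A\mid z,u)=\int z(dx)\,\Psi_u(x,A)$ with $\Psi_u(x,A):=\int \mathcal{T}(dx_1\mid x,u)\,Q(A\mid x_1)$, and to observe that $\Psi_u(\cdot,A)$ is bounded by $1$ and, by Assumption \ref{main_assumption}-\ref{regularity} together with $\|Q(A|\cdot)\|_\infty\le 1$, is $\alpha$-Lipschitz uniformly in $A$. Hence $\|\Psi_u(\cdot,A)\|_{BL}\le 1+\alpha$, giving $\|P(\cdot|z,u)-P(\cdot|z',u)\|_{TV}\le 2(1+\alpha)\rho_{BL}(z,z')$ and controlling (B). For (A) I would pass to the unnormalized joint predictive measure $\pi_{z,u}(dx_1,dy):=z(dx)\mathcal{T}(dx_1|x,u)Q(dy|x_1)$, which is linear in $z$ and whose perturbations inherit the $(1+\alpha)$ factor, and then renormalize against $P(dy\mid z,u)$ to produce another $(1+\alpha)$-type contribution. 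Summing (A) and (B) yields the overall factor $3(1+\alpha)$.

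For part (ii) no Lipschitz hypothesis is available and the argument instead exploits the Dobrushin-coupling interpretations of $\delta(Q)$ and $\tilde\delta(\mathcal{T})$. The coefficient $\delta(Q)$ can be realized as the mass of a state-independent component in a coupling of the family $\{Q(\cdot|x)\}_x$; used in (B), this replaces the naive worst-case factor $3$ for the pointwise Bayes-update stability by $3-2\delta(Q)$. Independently, $1-\tilde\delta(\mathcal{T})$ is the Dobrushin contraction constant for the prediction step acting on probability measures in total variation, which multiplies the initial $\|z-z'\|_{TV}$-discrepancy through the one-step pushforward by $\mathcal{T}$. Assembling (A) and (B) through these two independent reduction factors yields the bound $(3-2\delta(Q))(1-\tilde\delta(\mathcal{T}))\|z-z'\|_{TV}$.

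The principal technical obstacle is the nonlinear dependence of $F(z,u,y)$ on $z$ through Bayes' rule: the update divides by $P(y\mid z,u)$, which can be arbitrarily small and thus makes the pointwise map $y\mapsto F(z,u,y)$ poorly behaved. Two devices resolve this: first, working at the level of the unnormalized predictive joint $\pi_{z,u}$, which is linear in $z$ and whose perturbations dominate those of its conditional; second, integrating the pointwise Bayes-update discrepancy against the predictive measure $P(dy\mid z,u)$, which cancels the denominator and converts a seemingly unbounded pointwise quantity into a controllable expectation. Once these reductions are carried out, both conclusions follow by assembling the TV and $\rho_{BL}$ bounds on (A) and (B) -- in part (i) through the Lipschitz hypothesis on $\mathcal{T}$, and in part (ii) through the Dobrushin coupling structure of $Q$ and $\mathcal{T}$.
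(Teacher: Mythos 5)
The paper does not prove Theorem \ref{kara2020near1} itself (it is quoted from \cite{kara2020near}), but your decomposition into the terms (A) and (B), the Dobrushin bound on $\|P(\cdot\mid z,u)-P(\cdot\mid z',u)\|_{TV}$, and the device of integrating the Bayes-update discrepancy against the predictive measure to cancel the normalizing denominator are exactly the ingredients of the paper's own Appendix~A proof of the analogous Wasserstein contraction (Theorem \ref{ergodicity}), which explicitly builds on the same method. Your proposal is therefore essentially the same approach, with the only unaddressed technicality being the measurable selection of the near-optimizing test functions $g_y$ inside the integral over $\mathbb{Y}$, which the paper handles via the Kuratowski--Ryll-Nardzewski theorem.
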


Building on these results and their proof method, the following result follows from \cite{demirci2023geometric}, Theorem 5.1] which considered the control-free setup. A proof sketch is given in the appendix.
\begin{theorem}\label{ergodicity}
    Assume that $\mathbb{X}$ and $\mathbb{Y}$ are Polish spaces. 
    If Assumption \ref{main_assumption}-\ref{compactness} and Assumption \ref{main_assumption}-\ref{regularity} are 
    fulfilled, then we have
    $$
    W_{1}\left(\eta(\cdot \mid z_0, u), \eta\left(\cdot \mid z_0^{\prime},u\right)\right) 
    \leq \left(\frac{\alpha D (3-2\delta(Q))}{2}\right) W_{1}\left(z_0, z_0^{\prime}\right) .
    $$
    for all $z_0,z_0' \in \mathcal{Z}$, $u \in \mathbb{U}$.
\end{theorem}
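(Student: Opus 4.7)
The plan is to adapt the control-free argument of \cite[Theorem~5.1]{demirci2023geometric}, whose strategy closely mirrors the proof of Theorem~\ref{kara2020near1}(ii) from \cite{kara2020near}, to the present controlled setting. In that adaptation, the Lipschitz hypothesis on $\mathcal{T}$ (Assumption~\ref{main_assumption}-\ref{regularity}) plays the role that the Dobrushin coefficient of $\mathcal{T}$ plays in Theorem~\ref{kara2020near1}(ii), while compactness of $\X$ is what allows one to upgrade the TV estimate there to a $W_1$ estimate here. By Kantorovich--Rubinstein duality it suffices to bound
\[
\int g(F(z_0,u,y))\,P(dy|z_0,u) - \int g(F(z_0',u,y))\,P(dy|z_0',u)
\]
uniformly over $g:\Z\to\R$ that are $1$-Lipschitz with respect to $W_1$ on $\Z$; since $\Z$ has $W_1$-diameter at most $D$, each such $g$ may be centered to satisfy $|g|\le D/2$.

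The first ingredient is a $W_1$ contraction for the prediction operator $T(z,u)(\cdot):=\int \mathcal{T}(\cdot|x,u)\,z(dx)$. For every $f\in \Lip(\X,1)$, centering gives $|f|\le D/2$, and Assumption~\ref{main_assumption}-\ref{regularity} then yields
\[
\Bigl|\int f\,d\mathcal{T}(\cdot|x,u) - \int f\,d\mathcal{T}(\cdot|x',u)\Bigr| \le \tfrac{D}{2}\,\|\mathcal{T}(\cdot|x,u)-\mathcal{T}(\cdot|x',u)\|_{TV}\le \tfrac{D\alpha}{2}\,d(x,x'),
\]
so $x\mapsto \int f\,d\mathcal{T}(\cdot|x,u)$ is $(D\alpha/2)$-Lipschitz on $\X$. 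Integrating against $z_0-z_0'$ and taking the supremum over $f$ gives $W_1(T(z_0,u),T(z_0',u)) \le (D\alpha/2)\, W_1(z_0,z_0')$.

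The second ingredient is a $W_1$ contraction for the Bayesian update. Set $\pi:=T(z_0,u)$ and $\pi':=T(z_0',u)$, and note that $\eta(\cdot|z,u)$ is the pushforward under $y\mapsto F(z,u,y)$ of the predictive observation law. Following the Dobrushin decomposition used in Theorem~\ref{kara2020near1}(ii), I would write $Q(\cdot|x)=\delta(Q)\nu(\cdot)+(1-\delta(Q))R(\cdot|x)$ for a common measure $\nu$ and residual kernel $R$. The common piece $\delta(Q)\nu$ contributes identically to the two pushforwards and drops out of the discrepancy, while the deviating piece contributes one factor of $(1-\delta(Q))$ to the observation-law discrepancy and one further factor to the Bayes normalisation; tracking these against $1$-Lipschitz test functions on $\Z$ along the lines of \cite[Theorem~5.1]{demirci2023geometric} produces
\[
W_1(\eta(\cdot|z_0,u),\eta(\cdot|z_0',u)) \le (3-2\delta(Q))\,W_1(\pi,\pi').
\]
Combining the two bounds gives the claimed constant $K_2=\alpha D(3-2\delta(Q))/2$.

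The main obstacle I anticipate is the Bayesian-update step. Because of the normalisation by the predictive observation density the update map is nonlinear in the prior, and the delicate part is to carry out the Dobrushin-style bookkeeping while testing against $1$-Lipschitz functions in $W_1$ rather than against bounded-Lipschitz functions in $\rho_{BL}$. This is precisely the step handled in \cite[Theorem~5.1]{demirci2023geometric} for the control-free filter, and the extension to the controlled setting is painless since $\alpha$ and $\delta(Q)$ in Assumption~\ref{main_assumption} are uniform in $u$.
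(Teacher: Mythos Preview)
Your overall architecture---Kantorovich--Rubinstein duality, centering test functions to $|g|\le D/2$, and a Dobrushin split of $Q$---matches the paper's. The gap is in how you factor the estimate through the predictive law $\pi:=T(z_0,u)$. You propose
\[
W_1(\pi,\pi')\le \tfrac{D\alpha}{2}\,W_1(z_0,z_0'),\qquad
W_1(\eta_\pi,\eta_{\pi'})\le (3-2\delta(Q))\,W_1(\pi,\pi'),
\]
and multiply. The first inequality is fine. The second is \emph{false} in general: with no regularity on $Q$ the Bayes update is not $W_1$-Lipschitz in the prior. Take $\X=[0,1]$, $D=1$, $\Y=\{0,1\}$ with $Q(0|x)=\mathbf{1}_{\{x\le 1/2\}}$ (so $\delta(Q)=0$), and
\[
\pi=\tfrac12\delta_{1/2-\epsilon}+\tfrac12\delta_1,\qquad \pi'=\tfrac12\delta_{1/2+\epsilon}+\tfrac12\delta_1.
\]
Then $W_1(\pi,\pi')=\epsilon$, but $\eta_\pi=\tfrac12\delta_{\delta_{1/2-\epsilon}}+\tfrac12\delta_{\delta_1}$ while $\eta_{\pi'}=\delta_{\pi'}$, and a direct computation gives $W_1(\eta_\pi,\eta_{\pi'})=\tfrac14+\tfrac{\epsilon}{2}$, so the ratio blows up as $\epsilon\to 0$.

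What the paper actually does (and what the Dobrushin bookkeeping genuinely yields) is a \emph{cross-metric} factorization. After the triangle split, every term that survives involves integrating a function that is merely bounded by $D/2$---namely $y\mapsto g(F_{\pi'}(y))$ against $P_\pi-P_{\pi'}$, and $x\mapsto\int g_y(x)\,Q(dy|x)$ against $\pi-\pi'$---so one is forced to control $\|\pi-\pi'\|_{TV}$, not $W_1(\pi,\pi')$. The three pieces sum to
\[
W_1(\eta_\pi,\eta_{\pi'})\le \tfrac{D}{2}(3-2\delta(Q))\,\|\pi-\pi'\|_{TV}.
\]
The role of Assumption~\ref{main_assumption}-\ref{regularity} is then precisely to upgrade $W_1$ to $TV$ at the prediction step: for $\|g\|_\infty\le 1$ the map $x\mapsto\int g\,d\mathcal T(\cdot|x,u)$ is $\alpha$-Lipschitz, whence
\[
\|\pi-\pi'\|_{TV}\le \alpha\,W_1(z_0,z_0').
\]
Composing these two gives $K_2=\alpha D(3-2\delta(Q))/2$. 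In short: keep your outline, but swap your prediction bound for the $W_1\to TV$ one above, and your Bayes-update bound for the $TV\to W_1$ one; the $D/2$ sits on the Bayes side, and the $\alpha$ on the prediction side, not the other way around.
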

As we assume that $\mathbb{X}$ is compact, under the $W_1$ (that is, the $1$-Wasserstein or Kantorovich -Rubinstein) metric, $\mathcal{Z}$ is compact.

Under Assumption \ref{main_assumption}-\ref{CostLipschitz}, 
we have that $\tilde{c}$ defined in (\ref{tildecost}) is Lipschitz continuous, since
\begin{align}\label{c_tilde_cont}
&    |\tilde{c}(z,u) - \tilde{c}(z',u)| \nonumber \\
&    =    \left|\int_{\mathbb{X}}c(x,u)z(dx) - 
    \int_{\mathbb{X}}c(x,u)z'(dx)\right|
    \leq K_1 W_{1}\left(z, z^{\prime}\right).
\end{align}

\begin{theorem}\cite[Theorem 2]{KSYWeakFellerSysCont} \label{weakFeller}
   Under Assumption \ref{main_assumption}-\ref{totalvar}, 
    the transition probability 
    $\eta(\cdot \mid z, u)$ of the filter process is 
    weakly continuous in $(z, u)$.
\end{theorem}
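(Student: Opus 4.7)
The goal is to show that if $(z_n, u_n) \to (z, u)$ in $\mathcal{Z} \times \U$ (with $\mathcal{Z} = \mathcal{P}(\X)$ carrying the weak convergence topology), then $\eta(\cdot \mid z_n, u_n) \to \eta(\cdot \mid z, u)$ weakly in $\mathcal{P}(\mathcal{Z})$. Since $\mathcal{Z}$ is Polish, it suffices to test against a convergence-determining class of bounded continuous functions on $\mathcal{Z}$; a convenient one is the cylindrical algebra $g(\mu) = G\bigl(\int f_1\,d\mu, \dots, \int f_k\,d\mu\bigr)$ with $G \in C_b(\R^k)$ and $f_i \in C_b(\X)$, which separates points in $\mathcal{Z}$ and (together with tightness) determines weak convergence.

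The first main step is to establish weak continuity in $(z,u)$ of the joint predictive-observation kernel on $\X \times \Y$,
\[
R_{z, u}(dx_1, dy) := \int z(dx_0) \, \sT(dx_1 \mid x_0, u) \, Q(dy \mid x_1),
\]
which by Bayes' rule admits the disintegration $R_{z,u}(dx_1, dy) = F(z, u, y)(dx_1) \, P(dy \mid z, u)$. Given $\phi \in C_b(\X \times \Y)$, let $\bar\phi(x_1) := \int \phi(x_1, y) \, Q(dy \mid x_1)$, which is bounded measurable; then $\Phi(x_0, u) := \int \bar\phi(x_1) \, \sT(dx_1 \mid x_0, u)$ satisfies $|\Phi(x_0', u') - \Phi(x_0, u)| \leq \|\phi\|_\infty \, \|\sT(\cdot \mid x_0', u') - \sT(\cdot \mid x_0, u)\|_{TV}$, so TV continuity of $\sT$ gives $\Phi \in C_b(\X \times \U)$. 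Since $z_n \otimes \delta_{u_n} \to z \otimes \delta_u$ weakly on $\X \times \U$ (via Skorohod representation from $z_n \to z$ weakly and $u_n \to u$), applying weak convergence against $\Phi$ yields
\[
\int \phi\,dR_{z_n, u_n} = \int \Phi(x_0, u_n)\, z_n(dx_0) \to \int \Phi(x_0, u)\, z(dx_0) = \int \phi\,dR_{z, u},
\]
proving $R_{z_n, u_n} \to R_{z, u}$ weakly.

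The second step lifts this to $\eta$. In the linear case $g(\mu) = \int f\,d\mu$ with $f \in C_b(\X)$, the identity $\int g\,d\eta(\cdot \mid z, u) = \int f(x_1) \, R_{z,u}(dx_1 \times \Y)$ immediately gives continuity in $(z, u)$ from step one. The main obstacle is the general nonlinear cylindrical case
\[
\int g\,d\eta(\cdot \mid z, u) = \int_\Y G\bigl(F(z, u, y)(f_1), \dots, F(z, u, y)(f_k)\bigr) P(dy \mid z, u),
\]
whose integrand involves the Bayes map $F(z, u, y)$, which is generically discontinuous in $y$ wherever $Q$ places mass on null sets; no assumption on $Q$ is available to salvage pointwise continuity. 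The approach of \cite{KSYWeakFellerSysCont} circumvents this by an enlarged-state construction: form the measure $\tilde R_{z, u}(dx_1, dy, dz_1) := R_{z, u}(dx_1, dy) \, \delta_{F(z, u, y)}(dz_1)$ on $\X \times \Y \times \mathcal{Z}$, use the weak continuity of $R_{z, u}$ together with a measurable-disintegration argument to deduce $\tilde R_{z_n, u_n} \to \tilde R_{z, u}$ weakly, and then marginalize onto the $\mathcal{Z}$-coordinate to recover $\eta$. Controlling the Bayes discontinuities in this lifting step—rather than the weak continuity of $R_{z, u}$ itself—is where the bulk of the technical work concentrates.
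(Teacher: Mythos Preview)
The paper does not prove this theorem; it is quoted directly from \cite{KSYWeakFellerSysCont} without argument, so there is no in-paper proof to compare against. Your outline is accurate and aligns with the approach of the cited source: the weak continuity of the predictive kernel $R_{z,u}$ on $\X \times \Y$ follows from the TV continuity of $\sT$ exactly as you argue (your Step~1 is complete and correct), and the lifting to $\eta$ via the enlarged measure on $\X \times \Y \times \mathcal{Z}$ is indeed the device used there.

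That said, your Step~2 is a sketch rather than a proof. You correctly isolate the obstacle---the Bayes map $y \mapsto F(z,u,y)$ need not be continuous, so one cannot simply push weak convergence of $R_{z_n,u_n}$ through $F$---and you correctly name the remedy, but you do not execute it. The claim that $\tilde R_{z_n,u_n} \to \tilde R_{z,u}$ weakly on $\X \times \Y \times \mathcal{Z}$ is precisely the nontrivial part: it requires either a generalized dominated-convergence argument for integrands converging in measure against weakly (or setwise) converging measures, or an equivalent Young-measure/disintegration continuity lemma. As written, your proposal asserts the conclusion of that lemma without supplying it, so the proof is incomplete at exactly the point you yourself flag as ``where the bulk of the technical work concentrates.''
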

\noindent We also highlight that additional 
recent results are available on the weak Feller property, such as those in 
\cite{FeKaZg14}; however, as there are no restrictions on $Q$, 
this result \cite[Thm.2]{KSYWeakFellerSysCont} is relevant here.

\begin{lemma}\label{keyRegLemF}
Under Assumption \ref{main_assumption},
for any $\beta$ the value function $J_\beta^*$ 
is 
Lipschitz continuous with coefficient $K$, 
where $K=\frac{K_1}{1-\beta K_2}$.
\end{lemma}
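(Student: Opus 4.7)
The plan is to exploit value iteration for the belief MDP and propagate Lipschitz continuity through the Bellman operator. Define
$$(TV)(z) := \inf_{u\in\U}\left\{\tilde{c}(z,u) + \beta \int V(z_1)\,\eta(dz_1\mid z,u)\right\}$$
on bounded measurable functions $V:\Z\to\R$. Since $\X$ is compact and $c$ is continuous, $\tilde{c}$ is bounded, and so $T$ is a standard $\beta$-contraction on $(\mathbb{C}_b(\Z),\|\cdot\|_\infty)$. Iterating from $V_0\equiv 0$ yields $V_n := T^n 0 \to J_\beta^*$ uniformly, so Lipschitz control on the $V_n$ with a uniform-in-$n$ bound transfers to $J_\beta^*$.

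The main step is an induction showing that $V_n$ is Lipschitz on $(\Z, W_1)$ with constant $K_n$, where $K_0 = 0$ and $K_{n+1} = K_1 + \beta K_2 K_n$. Fix $z, z' \in \Z$ and $u\in\U$, and write $g_n(z,u) := \tilde{c}(z,u) + \beta\int V_n(z_1)\,\eta(dz_1\mid z,u)$. By (\ref{c_tilde_cont}), the cost term is $K_1$-Lipschitz in $z$ uniformly in $u$. For the integral term, the inductive hypothesis that $V_n$ is $K_n$-Lipschitz on $(\Z, W_1)$ combined with Kantorovich--Rubinstein duality (applied on $(\Z, W_1)$, viewing $\eta(\cdot\mid z,u)$ and $\eta(\cdot\mid z',u)$ as elements of $\Pf(\Z)$) gives
$$\left|\int V_n(z_1)\eta(dz_1\mid z,u) - \int V_n(z_1)\eta(dz_1\mid z',u)\right| \leq K_n\, W_1\bigl(\eta(\cdot\mid z,u),\,\eta(\cdot\mid z',u)\bigr).$$
By Theorem \ref{ergodicity} the right-hand side is bounded by $K_n K_2 W_1(z,z')$. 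Combining these and using the standard fact $|\inf_u g(z,u) - \inf_u g(z',u)| \leq \sup_u |g(z,u) - g(z',u)|$ yields $|V_{n+1}(z) - V_{n+1}(z')| \leq (K_1 + \beta K_2 K_n) W_1(z,z')$, closing the induction.

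Solving the linear recursion gives $K_n = K_1 \sum_{k=0}^{n-1}(\beta K_2)^k$. Assumption \ref{main_assumption}-\ref{dobr_coef} ensures $K_2 < 1$ and hence $\beta K_2 < 1$, so $K_n \uparrow K = \frac{K_1}{1-\beta K_2}$. Since each $V_n$ is Lipschitz with constant at most $K$ and $V_n \to J_\beta^*$ uniformly, the limit $J_\beta^*$ is Lipschitz with constant $K$, as desired.

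I do not anticipate a serious obstacle: the only subtlety is the bookkeeping in the Kantorovich duality step, where the outer $W_1$ in Theorem \ref{ergodicity} refers to the Wasserstein-$1$ distance on $\Pf(\Z)$ induced by the $W_1$ metric on $\Z$. Because $(\Z, W_1)$ is compact (as noted after Theorem \ref{ergodicity}), Lipschitz functions on it are automatically bounded, and the duality representation is directly applicable to the test function $V_n$, so the contraction argument goes through cleanly.
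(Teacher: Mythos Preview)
Your proposal is correct and follows essentially the same approach as the paper: both propagate a Lipschitz constant through the Bellman operator using Theorem \ref{ergodicity} and (\ref{c_tilde_cont}), obtain the recursion $K_{n+1}=K_1+\beta K_2 K_n$, and pass to the uniform limit $J_\beta^*$. The only cosmetic difference is that the paper iterates from an arbitrary $f\in\operatorname{Lip}(\Z,k)$ (later choosing $k\le K_1/(1-\beta K_2)$) whereas you start from $V_0\equiv 0$, which is simply the special case $k=0$.
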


\begin{proof} We adopt the approach in the proof of \cite[Theorem 4.37]{SaLiYuSpringer}. Let $f \in \operatorname{Lip}(\mathcal{Z}, k)$ for some $k>0$. 
Then $g=\frac{f}{k} \in \operatorname{Lip}(\mathcal{Z}, 1)$ 
and therefore for all $u \in \mathrm{U}$ and 
$z, y \in \mathcal{Z}$ we have
\begin{align}
& \left|\int_{\mathcal{Z}} f(x) \eta(d x \mid z, u)-
\int_{\mathcal{Z}} f(x) \eta(d x \mid y, u)\right| \\
& =k\left|\int_{\mathcal{Z}} g(x) \eta(d x \mid z, u)-
\int_{\mathcal{Z}} g(x) \eta(d x \mid y, u)\right| \\
& \leq k W_1(\eta(\cdot \mid z, u), \eta(\cdot \mid y, u)) 
\leq k K_2 W_1(z, y),
\end{align}
by Theorem \ref{ergodicity}. 
Let $T$ be the Bellman optimality operator. 
$$
(Tf)(x) = \min_{u \in U} \left\{ c(x, u) + \beta \int_{y \in \mathcal{Z}} \eta(dy | x, u) f(y) \right\} 
$$
We know that $J^*_\beta$ satisfy $T J^*_\beta= J^*_\beta$ \cite[Lemma 4.2.6]{HernandezLermaMCP}. 
$T$ is a contraction, so by Banach fixed point theorem $T^n f$ converges to $J^*_\beta$.
$$
\begin{aligned}
\mid T f(z) & -T f(y) \mid \\
& \leq \max _{u \in \mathrm{U}}
\bigg\{|\tilde{c}(z, u)-\tilde{c}(y, u)|  \nonumber \\
& +
\beta\left|\int_{\mathbf{Z}} f(x) \eta(d x \mid z, u)
-\int_{\mathbf{Z}} f(x) \eta(d x \mid y, u)\right|\bigg\} \\
& \leq K_1 W_{1}(z, y)+\beta k K_2 W_{1}(z, y)=
\left(K_1+\beta k K_2\right) W_{1}(z, y) =: M_1 W_{1}(z, y).
\end{aligned}
$$
By induction we have for all $n \geq 2$
$$
T^n f \in \operatorname{Lip}\left(\Z, M_n\right),
$$
where $M_{n} = K_1 + \beta K_2 M_{n-1}$ and thus $M_n=K_1 \sum_{i=0}^{n-1}\left(\beta K_2\right)^i+k\left(\beta K_2\right)^n$. 
Then, the sequence $M_n$ monotonically converges to
 $\frac{K_1}{1-\beta K_2}$ 
since $K_2<1$. Hence, 
$T^n f \in \operatorname{Lip}\left(\sZ, \frac{K_1}{1-\beta K_2}\right)$ 
for all $n$, and therefore, 
$J_\beta^* \in \operatorname{Lip}\left(\sZ, \frac{K_1}{1-\beta K_2}\right)$ 
since it is closed with respect to the sup-norm. Taking $k \leq \frac{K_1}{1-\beta K_2}$, we certify that the fixed point satisfies the desired Lipschitz continuity. 
\end{proof}
For any $\beta \in (0,1)$, $J^*_\beta \in 
\operatorname{Lip}\left(\sZ, \frac{K_1}{1-K_2}\right)$.
Therefore, for any $z_0\in \mathcal{Z}$
\begin{align}\label{boundedWas}
h_\beta(z) = J^*_{\beta}(z) - J^*_{\beta}(z_0) \leq 
\frac{K_1}{1 - K_2} W_1(z,z_0)\leq \frac{K_1\cdot D}{1-K_2}.
\end{align}

In view of the results above, we now introduce a crucial auxiliary result 
that will play a pivotal role in establishing our main result:

\begin{assumption}\label{lect}
\noindent
\begin{enumerate}
\item \label{bddcostfunction}
The one stage cost function $\tilde{c}$ is bounded and continuous.
\item \label{contK}
The stochastic kernel $\eta(\cdot \mid x, u)$ is 
weakly continuous in $(x, u) \in \Z \times \mathbb{U}$, 
i.e., if $\left(x_k, u_k\right) \rightarrow(x, u)$, 
then $\eta\left(\cdot \mid x_k, u_k\right) \rightarrow$ 
$\eta(\cdot \mid x, u)$ weakly.
\item 
$\mathbb{U}$ is compact.
\item
$\Z$ is $\sigma$-compact, that is, 
$\Z=\cup_n S_n$ where $S_n \subset S_{n+1}$ and each $S_n$ is compact.
\setcounter{tempcounter}{\value{enumi}}
\end{enumerate}
There exists $\alpha \in(0,1)$ and $N \geq 0$, and a state $z_0 \in \Z$ such that,
\begin{enumerate}
    \setcounter{enumi}{\value{tempcounter}}
\item \label{bddh}
$-N \leq h_\beta(z) \leq N$ for all $z \in \Z$ and $\beta \in[\alpha, 1)$, where
\begin{align}\label{relativeValue}
h_\beta(z)=J^*_\beta(z)-J^*_\beta\left(z_0\right).
\end{align}
\item \label{equic}
The sequence $\left\{h_{\beta(k)}\right\}$ 
is equicontinuous, where $\{\beta(k)\}$
is a sequence of discount factors converging to 
1 which satisfies $\lim _{k \rightarrow \infty}(1-\beta(k)) J_{\beta(k)}^*(z)=\rho^*$ 
for all $z \in \Z$ for some $\rho^* \in[0, L]$.
\end{enumerate}
\end{assumption}

\begin{lemma}\cite[Theorem 7.3.3]{yuksel2020control}\label{exisACOE}
    Under Assumption \ref{lect}, 
    a solution to the average cost optimality 
    equation (ACOE) exists, 
    leading to the existence of an optimal 
    control policy, and optimal cost is constant for 
    every initial state.
\end{lemma}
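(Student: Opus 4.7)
The plan is to carry out a vanishing discount argument, passing from the $\beta$-discounted Bellman equation to the ACOE along a sequence $\beta(k) \uparrow 1$. Under Assumption \ref{lect}.1--3, standard theory for discounted MDPs on Borel spaces with bounded continuous cost and weakly continuous kernel yields the discounted dynamic programming equation
\[
J_\beta^*(z) \;=\; \min_{u \in \mathbb{U}} \left\{ \tilde{c}(z,u) + \beta \int_{\mathcal{Z}} J_\beta^*(z_1)\, \eta(dz_1 \mid z,u) \right\}.
\]
Subtracting $J_\beta^*(z_0)$ from both sides and recalling the definition of $h_\beta$ in (\ref{relativeValue}) gives
\[
h_\beta(z) + (1-\beta)\, J_\beta^*(z_0) \;=\; \min_{u \in \mathbb{U}} \left\{ \tilde{c}(z,u) + \beta \int_{\mathcal{Z}} h_\beta(z_1)\, \eta(dz_1 \mid z,u) \right\}.
\]
This is the identity I will push to the limit along $\{\beta(k)\}$ from Assumption \ref{lect}.6.

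Next I would extract a limit of the relative value functions. By Assumption \ref{lect}.5 the family $\{h_{\beta(k)}\}$ is uniformly bounded (by $N$), by Assumption \ref{lect}.6 it is equicontinuous, and by Assumption \ref{lect}.4 $\mathcal{Z} = \cup_n S_n$ with $S_n$ compact. Applying Arzel\`a--Ascoli on each $S_n$ and a standard diagonal argument, I obtain a subsequence (still written $\beta(k)$) along which $h_{\beta(k)}$ converges uniformly on each $S_n$ to a bounded continuous function $h$ on $\mathcal{Z}$ with $|h| \le N$. The left-hand side of the displayed equation converges to $h(z) + \rho^*$ by the definition of $\rho^*$ in Assumption \ref{lect}.6.

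The main step is passing the limit through the minimization on the right. Fix $z \in \mathcal{Z}$ and let $u_k$ attain the minimum at stage $k$; by compactness of $\mathbb{U}$ (Assumption \ref{lect}.3), a subsequence satisfies $u_k \to u^*$. For the $\limsup \le$ direction, I evaluate the right side at any fixed $u \in \mathbb{U}$, write
\[
\int h_{\beta(k)}\, \eta(dz_1\mid z,u) - \int h\, \eta(dz_1 \mid z,u) \;=\; \int (h_{\beta(k)} - h)\,\eta(dz_1\mid z,u),
\]
and split the integral over $S_n$ and $S_n^c$; uniform convergence on $S_n$ handles the first piece, and tightness of the fixed measure $\eta(\cdot\mid z,u)$ together with $|h_{\beta(k)} - h| \le 2N$ handles the second as $n \to \infty$. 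For the $\liminf \ge$ direction I use that $(z_1,u) \mapsto \eta(\cdot \mid z, u)$ is weakly continuous in $u$ (Assumption \ref{lect}.2), that $h$ and $\tilde{c}(z,\cdot)$ are continuous and bounded, so the map $u \mapsto \tilde{c}(z,u) + \int h(z_1)\, \eta(dz_1 \mid z,u)$ is continuous, and then the value at $u_k$ converges to its value at $u^*$, which bounds the $\liminf$ from below by $\min_u\{\tilde c(z,u) + \int h\, d\eta\}$. Combining both directions yields the ACOE
\[
h(z) + \rho^* \;=\; \min_{u \in \mathbb{U}} \left\{ \tilde{c}(z,u) + \int_{\mathcal{Z}} h(z_1)\, \eta(dz_1 \mid z,u) \right\}.
\]
A measurable minimizer $\gamma^* : \mathcal{Z} \to \mathbb{U}$ exists by a standard measurable selection theorem, since the inner map is continuous in $u$ for each $z$ (upper-semicontinuous in $(z,u)$ suffices) with $\mathbb{U}$ compact.

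Finally, to conclude optimality and the constancy of the optimal cost, I verify the transversality condition (\ref{h_inf}): since $|h| \le N$, for every policy $\gamma$ and every initial $z$, $|E_z^\gamma[h(Z_t)]|/t \le N/t \to 0$, so the sup over $\gamma$ of the limit is zero. Applying the verification result stated after (\ref{acoe}) (e.g.\ \cite[Theorem 5.2.4]{HernandezLermaMCP}) to the triplet $(h,\rho^*,\gamma^*)$ gives that $\gamma^*$ is an optimal stationary policy for the POMDP under the average cost criterion, and $J^*(z) = \rho^*$ for every $z \in \mathcal{Z}$. The main obstacle in the argument is the interchange in the previous paragraph: the integrand $h_{\beta(k)}$ converges to $h$ only uniformly on compacta because $\mathcal{Z}$ is merely $\sigma$-compact, so controlling the tail on $S_n^c$ via tightness of $\eta(\cdot\mid z,u)$ and passing to $n\to\infty$ after $k\to\infty$ is the technical heart of the proof.
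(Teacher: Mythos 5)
Your proposal is correct and follows essentially the same route as the paper's proof: a vanishing discount argument extracting a limit $h$ of the relative value functions via Arzelà--Ascoli, followed by an interchange of limit and minimization using compactness of $\mathbb{U}$ and weak continuity of $\eta$. You are somewhat more explicit than the paper on the $\sigma$-compact tail estimates, the measurable selection of $\gamma^*$, and the verification of the transversality condition (\ref{h_inf}), whereas the paper's appendix specializes to compact $\mathcal{Z}$ and stops at exhibiting a bounded solution of the ACOE; these are completions of the same argument rather than a different one.
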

\begin{proof}
    See Appendix \ref{Acoe_proof}.
\end{proof}

\noindent We note that a similar result, with slightly stronger continuity conditions (not applicable to our setup) under \cite[Assumption 4.2.1]{HernandezLermaMCP}, can be found in \cite[Theorem 5.5.4]{HernandezLermaMCP}. Using this auxiliary result, we are now ready to present the proof of the main theorem.

\begin{proof}[Proof of Theorem \ref{main}]
First, we will show that if a partially 
observable Markov decision process (POMDP) 
satisfies Assumption \ref{main_assumption}, 
then the corresponding belief MDP satisfies Assumption \ref{lect}.

Assumption \ref{lect}-\ref{bddcostfunction} is 
valid due to equation (\ref{c_tilde_cont}). 
Assumptions \ref{lect}-3 and 4 hold because $\U$ 
and $\Z$ are compact. Assumption \ref{lect}-\ref{contK} 
follows from Theorem \ref{weakFeller}, and 
Assumption \ref{lect}-\ref{bddh} follows from 
(\ref{boundedWas}).

By inequality (\ref{boundedWas}), we know
\[h_\beta(z) = J^*_{\beta}(z) - J^*_{\beta}(z_0) \leq 
\frac{K_1}{1 - K_2} W_1(z,z_0)\]
for all $\beta \in (0,1)$. Therefore, $h_\beta$ is 
equicontinuous for all $\beta \in (0,1)$. 
Since this condition holds for all subsequences, 
Assumption \ref{lect}-\ref{equic} is satisfied.

Thus, if a POMDP satisfies Assumption \ref{main_assumption}, 
then the belief MDP satisfies Assumption \ref{lect}. 
All conditions of Lemma \ref{exisACOE} are satisfied, 
and the proof is completed using Lemma \ref{exisACOE}.
\end{proof}

\section{Implications 
for Approximations, Robustness
and Learning}\label{impl}

In this section, we discuss several implications 
of our results on the existence
of a solution to the average
cost optimality equation and Wasserstein regularity.

First, we establish robustness; studying how an optimal 
policy developed for an incorrect initial prior performs when applied to the correct 
initial distribution in the context of the average 
cost criteria. Under certain conditions, we find 
that the average cost optimization problem 
is robust to errors in initialization.

We then study approximate optimality through 
the quantization of the belief space, or $\Pf(\X)$. 
This method yields a near-optimal 
policy for the original problem. We then present an alternate 
method to construct a finite Markov Decision 
Process (MDP). This is achieved by replacing 
the complete observable Markov process with 
the most recent N observations and actions. 
Within this finite state MDP framework, 
we apply Q-learning to obtain a near-optimal policy.

\subsection{Robustness to Incorrect Priors for Discounted and Average Cost Criteria}\label{robustness}
In this section, we discuss the implications of our 
results on robustness. Let us first formally define the 
robustness problem. 
We have already introduced $J^*$ and $J_{\beta}^*$ in 
the first section. 

An optimal control policy $\gamma^\mu$ for 
a given prior $\mu$ is a policy 
that achieves the lowest expected 
cost over all admissible control policies:
$$
J\left(\mu, \gamma^\mu\right)=\inf _{\gamma \in \Gamma} J(\mu, \gamma)=J^*(\mu)
$$

Consider a scenario where a controller incorrectly assumes the system's prior to be $\nu$, while in reality, it is $\mu$. 
In this case, the controller would implement the policy  $\gamma^\nu$, optimal for $\nu$, but this results in an expected cost of $J\left(\mu, \gamma^\nu\right)$. If the controller
used the correct policy, the cost could have been $J^*(\mu)$. In studying robustness, we are interested in this difference:
$$
\begin{gathered}
J_\beta\left(\mu, \gamma^\nu_\beta\right)-J_\beta^*(\mu), \\
J\left(\mu, \gamma^\nu\right)-J^*(\mu)
\end{gathered}
$$
for the discounted cost problem and the average 
cost problem. Here $\gamma^\nu_\beta$ is an optimal
policy for the initial $\nu$ in the context of discounted cost criteria,
while $\gamma^\nu$ is an optimal
policy for the initial $\nu$ in the context of average cost criteria.

In the literature on POMDPs, there are upper
 bounds on this error cost for discounted 
 cost problem \cite{KYSICONPrior}, 
 studies on its relationship with 
 filter stability, and attempts to 
 find uniform upper bounds for average 
 cost problem \cite{MYRobustControlledFS}. 
 There is also work considering how 
 robustness is affected under 
 different transition kernels for 
 discounted cost problems \cite{kara2020robustness}. The following builds on \cite[Theorem 3.2]{KYSICONPrior}.


\begin{theorem}\cite[Theorem 3.8]{MYRobustControlledFS},\cite[Theorem 3.2]{KYSICONPrior}
Assume the cost function $c$ is bounded, nonnegative, and measurable. Let $\gamma^\nu$ be the optimal control policy designed with respect to a prior $\nu$. Then we have
$$
J\left(\mu, \gamma^\nu\right)-J^*(\mu) \leq 2\|c\|_{\infty}\|\mu-\nu\|_{T V}
$$
\end{theorem}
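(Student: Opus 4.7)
The plan is to decompose the robustness gap into two pieces that can each be controlled by a single prior-perturbation estimate. Using the optimality of $\gamma^\nu$ under prior $\nu$ (so that $J(\nu, \gamma^\nu) = J^*(\nu)$), I would first write the telescoping identity
\[
J(\mu, \gamma^\nu) - J^*(\mu) = \bigl[J(\mu, \gamma^\nu) - J(\nu, \gamma^\nu)\bigr] + \bigl[J^*(\nu) - J^*(\mu)\bigr].
\]
It then suffices to establish a single lemma: for every admissible policy $\gamma$,
\[
|J(\mu, \gamma) - J(\nu, \gamma)| \leq \|c\|_\infty \|\mu - \nu\|_{TV}.
\]

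To prove this lemma, I would fix $n \geq 1$ and set $f_n := \sum_{t=0}^{n-1} c(X_t, U_t)$. The crucial observation is that an admissible policy is measurable with respect to the observation-action history only, and the conditional joint law of $(Y_t, U_t, X_t)_{t \geq 0}$ given $X_0 = x$ is determined solely by $\mathcal{T}$, $Q$, and $\gamma$ — not by the prior. Hence $h(x) := E^\gamma[f_n \mid X_0 = x]$ is well-defined, and $\|h\|_\infty \leq n\|c\|_\infty$. By the dual characterization (\ref{TValternative}) of total variation,
\[
\bigl|E_\mu^\gamma[f_n] - E_\nu^\gamma[f_n]\bigr| = \biggl|\int h(x)\,(\mu - \nu)(dx)\biggr| \leq n\,\|c\|_\infty\,\|\mu - \nu\|_{TV}.
\]
Dividing by $n$ yields $\tfrac{1}{n}E_\mu^\gamma[f_n] \leq \tfrac{1}{n}E_\nu^\gamma[f_n] + \|c\|_\infty \|\mu - \nu\|_{TV}$; taking $\limsup_n$ of both sides — the right-hand side is a $\limsup$ plus a constant, so $\limsup$ distributes correctly — gives $J(\mu, \gamma) \leq J(\nu, \gamma) + \|c\|_\infty \|\mu - \nu\|_{TV}$, and the reverse by symmetry.

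Applying the lemma directly to the first bracket, and to the second via the standard estimate $|\inf_\gamma J(\nu, \gamma) - \inf_\gamma J(\mu, \gamma)| \leq \sup_\gamma |J(\nu, \gamma) - J(\mu, \gamma)|$, one picks up two copies of $\|c\|_\infty \|\mu - \nu\|_{TV}$, producing the factor of $2$ in the stated bound. The main obstacle I would flag is the subtle handling of $\limsup$: one cannot simply push a difference inside, so the argument must be framed as a one-sided bound whose perturbation is a pure additive constant before passing to the limit. Finally, it is worth observing that this proof does not invoke Assumption \ref{main_assumption}; when ACOE holds (as under Theorem \ref{main}) $J^*$ is constant in the prior, the second bracket vanishes identically, and the factor of $2$ sharpens to $1$ — so the $2$ here is precisely the price of dispensing with the ACOE hypothesis.
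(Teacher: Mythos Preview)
Your argument is correct. The telescoping decomposition via $J(\nu,\gamma^\nu)=J^*(\nu)$, the conditioning-on-$X_0$ lemma that yields $|J(\mu,\gamma)-J(\nu,\gamma)|\le \|c\|_\infty\|\mu-\nu\|_{TV}$ uniformly over admissible $\gamma$, and the passage from the uniform bound to $|J^*(\nu)-J^*(\mu)|$ via $|\inf-\inf|\le\sup|\cdot|$ are all sound; your care with the one-sided $\limsup$ step is exactly right.

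As for comparison: the paper does not supply its own proof of this statement. It is imported verbatim from \cite[Theorem~3.8]{MYRobustControlledFS} (with the discounted antecedent in \cite[Theorem~3.2]{KYSICONPrior}), and no argument appears in the text. Your route---condition on $X_0$, bound the $n$-step cost by $n\|c\|_\infty$, invoke the dual form of total variation, divide by $n$ and pass to the $\limsup$---is the standard one used in those references, so there is no substantive methodological difference to report. Your closing remark that under Theorem~\ref{main} the second bracket vanishes and the constant sharpens from $2$ to $1$ is a nice observation, consistent with the paper's subsequent corollary that under ACOE the problem becomes \emph{completely} robust.
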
 
However, by utilizing filter stability, the bound can be refined, as we will observe in the following. 
\begin{defn}
    A filter process is said to be stable in the sense of total variation in expectation with respect to a policy $\gamma$ if for any measure $\nu$ with $\mu \ll \nu$ we have 
    \newline $\lim _{n \rightarrow \infty} E^{\mu, \gamma}\left[\left\|\pi_n^{\mu, \gamma}-\pi_n^{\nu, \gamma}\right\|_{T V}\right]=0$.
    \newline The filter is universally stable in total variation in expectation if it holds with respect to every admissible policy $\gamma \in \Gamma$.
\end{defn}

For the average cost criteria, under Assumption \ref{main_assumption},
asymptotic filter stability is sufficient for robustness, as the following shows:

\begin{theorem}[\cite{MYRobustControlledFS}, Theorem 3.9]
Assume the cost function $c$ is bounded, nonnegative, and measurable and assume the filter is universally stable in total variation in expectation. Consider the span semi-norm:
$$
\left\|J^*\right\|_{s p}:=\sup _{\mu_1 \in \mathcal{P}(\mathcal{X})} J^*\left(\mu_1\right)-\inf _{\mu_2 \in \mathcal{P}(\mathcal{X})} J^*\left(\mu_2\right)
$$
then we have
$$
J\left(\mu, \gamma^\nu\right)-J^*(\mu) \leq\left\|J^*\right\|_{s p} .
$$
In particular, if $\left\|J^*\right\|_{s p}=0$, then the average cost optimization problem is completely robust to initialization errors.
\end{theorem}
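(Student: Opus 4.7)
The plan is to isolate a decomposition that reduces the robustness error to the span of $J^*$ plus a remainder that vanishes by universal filter stability. Since $\gamma^\nu$ is optimal for the prior $\nu$, we have $J(\nu,\gamma^\nu)=J^*(\nu)$, and adding and subtracting this quantity gives
\[
J(\mu,\gamma^\nu)-J^*(\mu) \;=\; \bigl[J(\mu,\gamma^\nu)-J(\nu,\gamma^\nu)\bigr] \;+\; \bigl[J^*(\nu)-J^*(\mu)\bigr].
\]
The second bracket is at most $\sup_{\mu_1}J^*(\mu_1)-\inf_{\mu_2}J^*(\mu_2)=\|J^*\|_{sp}$ by definition, so it suffices to show the first bracket is non-positive, and in fact vanishes.

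For the first bracket, the plan is to pass to the belief-MDP representation. Let $\pi_t^{\mu,\gamma^\nu}$ denote the true filter and $\pi_t^{\nu,\gamma^\nu}$ the filter the controller actually computes from the observation history using the (incorrect) prior $\nu$. By the tower property,
\[
E_\mu^{\gamma^\nu}[c(X_t,U_t)] \;=\; E_\mu^{\gamma^\nu}\!\left[\tilde c\bigl(\pi_t^{\mu,\gamma^\nu},U_t\bigr)\right],
\]
and replacing $\pi_t^{\mu,\gamma^\nu}$ by $\pi_t^{\nu,\gamma^\nu}$ inside $\tilde c$ introduces an error at most $\|c\|_\infty\,E^{\mu,\gamma^\nu}\bigl[\|\pi_t^{\mu,\gamma^\nu}-\pi_t^{\nu,\gamma^\nu}\|_{TV}\bigr]$, which tends to zero by the universal filter-stability hypothesis. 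A Ces\`aro average followed by $\limsup_n$ then identifies $J(\mu,\gamma^\nu)$ with a limit functional that depends only on the controller's internal filter process driven by the observation stream.

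The main obstacle is the comparison of this limit functional under $P_\mu^{\gamma^\nu}$ versus $P_\nu^{\gamma^\nu}$: the integrand has been reduced to a function of the observation history, but the law of that history itself depends on the true prior. The plan is to exploit that, at each step, the one-step predictive kernel for $Y_{t+1}$ differs under the two priors only through a functional of $\pi_t^{\mu,\gamma^\nu}-\pi_t^{\nu,\gamma^\nu}$, which is again controlled by filter stability; iterating this one-step bound and averaging over $t$ absorbs the residual into a vanishing Ces\`aro term, yielding $J(\mu,\gamma^\nu)=J(\nu,\gamma^\nu)=J^*(\nu)$. Combining this with the bound on the second bracket proves $J(\mu,\gamma^\nu)-J^*(\mu)\leq\|J^*\|_{sp}$; when $\|J^*\|_{sp}=0$, as is guaranteed by the constancy of $J^*$ in Theorem \ref{main}, this yields complete robustness to initialization errors.
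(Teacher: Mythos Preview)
The paper does not supply its own proof of this statement (it is quoted from \cite{MYRobustControlledFS}), so there is no in-paper argument to compare against. That said, your proposal has a genuine gap.

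Your decomposition hinges on the claim that the first bracket $J(\mu,\gamma^\nu)-J(\nu,\gamma^\nu)$ vanishes (or is at least non-positive). This is false in general, even under universal filter stability. Take $\X=\Y=\{0,1\}$, perfect observations $Y_t=X_t$, a constant state $\T(\cdot\mid x,u)=\delta_x$, a singleton action set, and cost $c(x,u)=x$. The filter is universally stable (after one observation both filters equal $\delta_{X_0}$), yet $J(\mu,\gamma)=\mu(\{1\})$ for every policy $\gamma$. With $\mu=\delta_1$ and $\nu=\tfrac12\delta_0+\tfrac12\delta_1$ (so $\mu\ll\nu$) the first bracket equals $1-\tfrac12=\tfrac12>0$. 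Filter stability controls $\|\pi_t^{\mu,\gamma}-\pi_t^{\nu,\gamma}\|_{TV}$ \emph{along the same observation path}; it says nothing about the laws of those paths under $P_\mu$ versus $P_\nu$, and the difference in the law of $Y_0$ never disappears. Your ``iterate the one-step predictive bound'' step cannot recover from this: the total-variation distance between the marginals of $Y_{[0,N]}$ under $P_\mu$ and $P_\nu$ is bounded below by a fixed constant for all $N$.

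The argument that does work is a time-shift combined with the total-variation robustness bound stated just before this theorem. For any $n$, the first $n$ stage costs do not affect the $\limsup$ average, and from time $n$ the controller is effectively applying the optimal policy for the (incorrect) belief $\pi_{n-}^{\nu,\gamma^\nu}$ while the true belief is $\pi_{n-}^{\mu,\gamma^\nu}$. One then writes
\[
J(\mu,\gamma^\nu)-J^*(\mu)\;\le\;E_\mu^{\gamma^\nu}\!\Big[J\big(\pi_{n-}^{\mu,\gamma^\nu},\gamma^{\pi_{n-}^{\nu,\gamma^\nu}}\big)-J^*\big(\pi_{n-}^{\mu,\gamma^\nu}\big)\Big]\;+\;E_\mu^{\gamma^\nu}\!\big[J^*(\pi_{n-}^{\mu,\gamma^\nu})\big]-J^*(\mu),
\]
bounds the first expectation by $2\|c\|_\infty\,E_\mu^{\gamma^\nu}[\|\pi_{n-}^{\mu,\gamma^\nu}-\pi_{n-}^{\nu,\gamma^\nu}\|_{TV}]$ via the preceding theorem, and bounds the second difference by $\|J^*\|_{sp}$. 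Letting $n\to\infty$ kills the first term by filter stability and yields the claim. Your use of filter stability is the right instinct, but it has to be applied to the \emph{robustness gap at the shifted belief}, not to equating $J(\mu,\gamma^\nu)$ with $J(\nu,\gamma^\nu)$.
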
 
We note that in \cite{MYRobustControlledFS}, the existence of an optimal solution was not shown; our implication also presents an existence result for optimal policies. Under Assumption \ref{main_assumption}, we show that $\left\|J^*\right\|_{s p}=0$ 
(Theorem \ref{main}), meaning if the filter is 
universally stable in total variation in expectation, 
then the average cost optimization problem is 
completely robust to initialization errors. 
\begin{corollary}
    Under Assumption \ref{main_assumption} and 
    assuming universal filter stability in total 
    variation in expectation, we have that
    $$
    J\left(\mu, \gamma^\nu\right)=J^*(\mu) \quad \forall \mu,\nu \in \Z .
    $$
\end{corollary}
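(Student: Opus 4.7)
The plan is to combine Theorem \ref{main} with the span-seminorm bound from \cite[Theorem 3.9]{MYRobustControlledFS}. The decisive observation is that Theorem \ref{main} yields not merely existence of a solution to the ACOE, but also the stronger conclusion that the optimal average cost $J^*(\mu)$ is a constant $\rho^*$ independent of the initial belief $\mu \in \Z$. Consequently, the span seminorm
$$\|J^*\|_{sp} = \sup_{\mu_1 \in \Z} J^*(\mu_1) - \inf_{\mu_2 \in \Z} J^*(\mu_2) = \rho^* - \rho^* = 0.$$

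With $\|J^*\|_{sp}=0$ in hand, I would invoke the universal filter stability hypothesis to apply \cite[Theorem 3.9]{MYRobustControlledFS}, which gives $J(\mu, \gamma^\nu) - J^*(\mu) \leq \|J^*\|_{sp} = 0$. The reverse inequality $J^*(\mu) \leq J(\mu, \gamma^\nu)$ is immediate from the definition of $J^*(\mu)$ as the infimum of $J(\mu, \gamma)$ over admissible policies $\gamma \in \Gamma$. Chaining the two inequalities yields the asserted equality $J(\mu, \gamma^\nu) = J^*(\mu)$ for every $\mu, \nu \in \Z$.

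There is no substantive obstacle here: the corollary is essentially a one-line synthesis of Theorem \ref{main} (which supplies the constancy of $J^*$) and the already cited span-seminorm bound. The only mild technical check is to verify the hypotheses of \cite[Theorem 3.9]{MYRobustControlledFS}, namely that the cost is bounded, nonnegative, and measurable, under Assumption \ref{main_assumption}. Boundedness and measurability are immediate from item (5) of Assumption \ref{main_assumption} combined with the compactness of $\X$; nonnegativity can be arranged without loss of generality by shifting $c$ by a constant, which merely shifts $J$, $J^*$, and $\rho^*$ by the same additive constant and hence preserves both the equality $\|J^*\|_{sp}=0$ and the sought identity $J(\mu, \gamma^\nu) = J^*(\mu)$.
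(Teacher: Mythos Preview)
Your proposal is correct and matches the paper's approach exactly: the paper states the corollary as an immediate consequence of Theorem \ref{main} (which gives $\|J^*\|_{sp}=0$) combined with \cite[Theorem 3.9]{MYRobustControlledFS}, precisely as you outline. The paper does not even write out a formal proof, treating it as evident from the preceding discussion.
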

\begin{note}
As mentioned in Note \ref{Curtis_exp}, 
\cite{mcdonald2020exponential}, Theorem 4.1, 
demonstrates that if the condition 
$(1-\tilde{\delta}(\T))(2-\delta(Q)) < 1$ is met, 
then the filter is universally stable in total
variation in expectation.
\end{note}

In addition to our results on the average cost optimality equation,
our analysis is also consequential for the discounted cost criterion
and offering a improvement to \cite[Theorem 3.10]{MYRobustControlledFS}. 
Utilizing the span semi-norm bound derived in our analysis 
of the average cost optimality equation, and by 
modifying the proof of \cite[Theorem 3.10]{MYRobustControlledFS}, 
along with individually bounding equations (1.8), (1.9), and (1.10) 
from the same source, we arrive at the following theorem:
\begin{theorem}
Under Assumption \ref{main_assumption} and the condition 
$\bar{\alpha}:=(1-\tilde{\delta}(T))(2-\delta(Q)) < 1$, we have the following robustness bound:
$$J_{\beta}(\mu,\gamma^{\nu}_\beta) - J^*_{\beta}(\mu) 
\leq \inf_{n \in \mathbb{N}}
\left( \|c\|_{\infty}\frac{(1 - \beta^{n})}{1-\beta} + 
\beta^n \frac{D K_1}{1 - K_2\beta} + 4 \frac{\|c\|_{\infty}}{1-\beta}(\bar{\alpha} \beta)^n\right).$$
\end{theorem}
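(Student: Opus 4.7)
The plan is to revisit and sharpen the proof of \cite[Theorem 3.10]{MYRobustControlledFS}, which already delivers a bound of this shape but with looser constants. That proof decomposes the prior-mismatch regret $J_\beta(\mu,\gamma^\nu_\beta)-J^*_\beta(\mu)$ into three pieces -- corresponding to equations (1.8), (1.9), and (1.10) in the source -- namely a truncation of the infinite horizon at an arbitrary stage $n$, a span/Lipschitz comparison of $J^*_\beta$ at two different belief points, and a filter-stability contribution. My strategy would be to leave the skeleton of this decomposition intact and substitute (i) the explicit Lipschitz constant $K_1/(1-\beta K_2)$ for $J^*_\beta$ supplied by Lemma \ref{keyRegLemF} into the middle piece, and (ii) the exponential filter-stability rate $\bar\alpha$ afforded by the hypothesis $\bar\alpha<1$ (see \cite{mcdonald2020exponential}, Theorem 4.1) into the last piece. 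Since all three pieces depend on the free cut-off $n$, the final inequality is obtained by taking the infimum over $n\in\mathbb{N}$.

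Concretely, for a fixed $n$, I would split the infinite-horizon cost into its first $n$ stages and its tail. The first-stages block is bounded pathwise by $\|c\|_\infty\sum_{t=0}^{n-1}\beta^t=\|c\|_\infty(1-\beta^n)/(1-\beta)$, producing the first summand. For the tail one uses the Markov/DPP structure to reduce matters to comparing $J^*_\beta$ evaluated at the true filter $\pi_n^{\mu,\gamma^\nu_\beta}$ and at the nominal filter $\pi_n^{\nu,\gamma^\nu_\beta}$. One residual is controlled by the crude diameter bound $W_1(\pi_n^{\mu,\gamma^\nu_\beta},\pi_n^{\nu,\gamma^\nu_\beta})\le D$ combined with the Lipschitz constant from Lemma \ref{keyRegLemF}; together with the factor $\beta^n$ from the horizon split this yields the second summand $\beta^n DK_1/(1-\beta K_2)$. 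The remaining residual, which genuinely measures the true-versus-nominal filter discrepancy in expectation, is handled by pairing the trivial bound $\|J^*_\beta\|_\infty\le\|c\|_\infty/(1-\beta)$ with the filter-stability estimate $E^{\gamma}_\mu\bigl[\|\pi_n^{\mu,\gamma}-\pi_n^{\nu,\gamma}\|_{TV}\bigr]\lesssim \bar\alpha^n$ (valid uniformly in admissible $\gamma$ under $\bar\alpha<1$), producing the third summand $4\|c\|_\infty(\bar\alpha\beta)^n/(1-\beta)$; the factor $4$ accumulates from the two-sided TV comparison and the additional comparison needed to isolate the filter-mismatch piece from the value-function differences.

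The main obstacle lies in the bookkeeping: the three contributions must be produced \emph{additively}, not multiplicatively, so that the slow $\beta^n$-decay of the span term and the faster $(\bar\alpha\beta)^n$-decay of the filter-stability term remain separate summands that are individually optimized by the outer infimum in $n$. A related subtlety is that the Lipschitz constant to be used in the middle term is the $\beta$-dependent one, $K_1/(1-\beta K_2)$, rather than its average-cost counterpart $K_1/(1-K_2)$ appearing elsewhere in the paper; this is precisely the refinement that the original proof in \cite{MYRobustControlledFS} did not exploit, and it is what produces the improved constant in the second summand. Once these two changes are installed, the remainder of the argument is a line-by-line verification of the three estimates followed by the $\inf_{n}$.
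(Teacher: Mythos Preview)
Your plan is essentially the paper's own proof: follow the three-term decomposition of \cite{MYRobustControlledFS} (transient cost, strategic measure cost, approximation cost), bound the first trivially, bound the second via the span of $J^*_\beta$ using the Lipschitz constant $K_1/(1-\beta K_2)$ from Lemma~\ref{keyRegLemF}, and bound the third via the exponential filter-stability rate $\bar\alpha$ from \cite{mcdonald2020exponential}. One small correction to your description: the ``strategic measure'' piece in the decomposition is not $J^*_\beta(\pi_n^{\mu,\gamma^\nu_\beta})-J^*_\beta(\pi_n^{\nu,\gamma^\nu_\beta})$ but rather $E^{\mu,\gamma^\nu_\beta}[J^*_\beta(\pi_{n-}^{\mu,\gamma^\nu_\beta})]-E^{\mu,\gamma^\mu_\beta}[J^*_\beta(\pi_{n-}^{\mu,\gamma^\mu_\beta})]$, i.e.\ a comparison under two \emph{different policies} with the \emph{same} prior; however, since this is bounded by the span $\|J^*_\beta\|_{sp}\le K_1 D/(1-\beta K_2)$ rather than by a pointwise Lipschitz estimate, your stated bound is still the right one and the argument goes through unchanged.
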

\begin{proof}
    A brief proof is provided here. 
    \begin{align}
    & \nonumber J_\beta\left(\mu, \gamma^\nu_\beta\right)-J_\beta^*(\mu)
    \\ \nonumber & = E^{\mu, \gamma^\nu_\beta}\left[\sum_{i=0}^{n-1} 
    \beta^i c\left(X_i, U_i\right)\right]-E^{\mu, \gamma^\mu_\beta}
    \left[\sum_{i=0}^{n-1} \beta^i c\left(X_i, U_i\right)\right] 
    \\ \nonumber & +\beta^n\left(E^{\mu, \gamma^\nu_\beta}\left[J_\beta
    \left(\pi_{n-}^{\mu, \gamma_\beta^\nu}, \gamma_\beta^{\nu^{\prime}}\right)
    \right]-E^{\mu, \gamma_\beta^\mu}\left[J_\beta^*
    \left(\pi_{n-}^{\mu, \gamma_\beta^\mu}\right)\right]\right) 
    \\ \nonumber & = E^{\mu, \gamma_\beta^\nu}\left[\sum_{i=0}^{n-1} 
    \beta^i c\left(X_i, U_i\right)\right]-E^{\mu, \gamma_\beta^\mu}
    \left[\sum_{i=0}^{n-1} \beta^i c\left(X_i, U_i\right)\right] 
    \\ \nonumber & +\beta^n\left(E^{\mu, \gamma_\beta^\nu}\left[J_\beta
    \left(\pi_{n-}^{\mu, \gamma_\beta^\nu}, \gamma_\beta^{\nu^{\prime}}
    \right)+J_\beta^*\left(\pi_{n-}^{\mu, \gamma_\beta^\nu}\right)
    -J_\beta^*\left(\pi_{n-}^{\mu, \gamma_\beta^\nu}\right)\right]-
    E^{\mu, \gamma_\beta^\mu}\left[J_\beta^*
    \left(\pi_{n-}^{\mu, \gamma_\beta^\mu}\right)\right]\right)
    \\ \label{transient_cost} &=E^{\mu, \gamma_\beta^\nu}\left[\sum_{i=0}^{n-1} \beta^i c\left(X_i, U_i\right)\right]-
     E^{\mu, \gamma_\beta^\mu}\left[\sum_{i=0}^{n-1} \beta^i c\left(X_i, U_i\right)\right]
    \\ \label{strategic_m_cost} & +\beta^n\left(E^{\mu, \gamma_\beta^\nu}\left[J_\beta^*
    \left(\pi_{n-}^{\mu, \gamma_\beta^\nu}\right)\right]
    -E^{\mu, \gamma_\beta^\mu}\left[J_\beta^*
    \left(\pi_{n-}^{\mu, \gamma_\beta^\mu}\right)\right]\right)
    \\ \label{approximation_cost} & +\beta^n\left(E^{\mu, \gamma_\beta^\nu}\left[J_\beta
    \left(\pi_{n-}^{\mu, \gamma_\beta^\nu}, \gamma_\beta^{\nu^{\prime}}\right)
    -J_\beta^*\left(\pi_{n-}^{\mu, \gamma_\beta^\nu}\right)\right]\right),
    \end{align}
    where $\nu^{\prime}=\pi_{n-}^{\nu, \gamma_\beta^\nu}$ and 
    $\gamma_\beta^{\nu^{\prime}}$ is an optimal discounted policy
    for $\nu^{\prime}$.
As in \cite{MYRobustControlledFS}, we can refer to 
these three components as transient cost, 
strategic measure cost, and approximation cost, respectively.  The transient cost (\ref{transient_cost}) is upper bound by
$$
E^{\mu, \gamma_\beta^\nu}\left[\sum_{i=0}^{n-1} \beta^i c\left(x_i, u_i\right)\right]-E^{\mu, \gamma_\beta^\mu}\left[\sum_{i=0}^{n-1} \beta^i c\left(x_i, u_i\right)\right] \leq\|c\|_{\infty} \sum_{i=0}^{n-1} \beta^i=\|c\|_{\infty}\left(\frac{1-\beta^n}{1-\beta}\right).
$$
The strategic measure cost (\ref{strategic_m_cost}) is upper bound by
$$
\beta^n\left(E^{\mu, \gamma_\beta^\nu}\left[J_\beta^*\left(\pi_{n-}^{\mu, \gamma_\beta^\nu}\right)\right]-
E^{\mu, \gamma_\beta^\mu}\left[J_\beta^*\left(\pi_{n-}^{\mu, \gamma_\beta^\mu}\right)\right]\right) 
\leq \beta^n\left\|J_\beta^*\right\|_{s p}
\leq \beta^n \frac{K_1}{1-\beta K_2} D
$$
because of Lemma \ref{keyRegLemF}.
The approximation cost (\ref{approximation_cost}) is upper bound by
$$
\beta^n\left(E^{\mu, \gamma_\beta^\nu}\left[J_\beta\left(\pi_{n-}^{\mu, \gamma_\beta^\nu}, \gamma_\beta^{\pi_{n-}^{\nu, \gamma_\beta^\nu}}\right)-J_\beta^*\left(\pi_{n-}^{\mu, \gamma_\beta^\nu}\right)\right]\right) \leq 4 \frac{\|c\|_{\infty}}{1-\beta}(\bar{\alpha} \beta)^n
$$
following the stability criteria from  \cite[Theorem 4.1]{mcdonald2020exponential}.
We establish the desired result.
\end{proof}

\subsection{Approximations and Learning}

Throughout the rest of this section, we assume that 
Assumption \ref{main_assumption} holds. Earlier, we have already shown that if Assumption 
\ref{main_assumption} is valid for the POMDP, 
then Assumption \ref{lect} is applicable 
for the belief-MDP. The following theorems 
assist in finding near-optimal policies 
for the average cost criterion; for further analysis see \cite[Theorem 5]{creggZeroDelayNoiseless} or \cite[Theorem 7.3.6]{yuksel2020control}. In particular, that a solution to the average cost optimality equation exists and this is arrived at via the vanishing discount method in our analysis earlier is critical for the applicability of the following result:

\begin{theorem}
a) Under Assumption \ref{lect}, $\lim _{\beta \uparrow 1}(1-\beta) J^*_\beta\left(x_0\right) \rightarrow \rho^*$, where $\rho^*$ is the optimal average cost. Furthermore, if $\gamma_\beta$ solves the discounted cost optimality equation:
$$J^*_\beta(x)=\min _{u \in \mathbb{U}}\left\{c(x, u)+\beta \int_{\mathbb{X}} J^*_\beta(y) \mathcal{T}(d y \mid x, u)\right\}.$$
Then, for every $\epsilon>0$, there exists $\beta_{\epsilon} \in (0,1)$ such that for $\beta \in [\beta_{\epsilon},1)$,
$$
J\left(x, \gamma_\beta\right)-\rho^*<\epsilon.
$$
This implies that the discounted cost optimal 
policy is near-optimal for the average cost criterion.

b) Under Assumption \ref{lect}, let $\beta_\epsilon$ be chosen as above, such that with $h_{\beta}$ as given in (\ref{relativeValue}),
$$
\left|\rho^*-\left(1-\beta_\epsilon\right) J_{\beta_\epsilon}\left(x_0\right)\right| \leq \frac{\epsilon}{2}, \quad \mathrm{and} \quad \left(1-\beta_\epsilon\right)\left\|h_{\beta_\epsilon}\right\|_{\infty} \leq \frac{\epsilon}{2},
$$
so that $\gamma_{\beta_\epsilon}$ is $\epsilon$-optimal. 
Suppose that $\gamma_{\beta_\epsilon}^\delta$ is 
$\delta$-optimal policy for $\beta_\epsilon$ discounted
cost criteria, satisfying
$$
J_{\beta_\epsilon}\left(x, \gamma_{\beta_\epsilon}^\delta\right)-J_{\beta_\epsilon}(x)<\delta .
$$
Then,
$$
J\left(x, \gamma_{\beta_\epsilon}^\delta\right)-\rho^*<\epsilon+\delta
$$
That is a near-optimal discounted cost policy is also near-optimal for the average cost criterion.
\end{theorem}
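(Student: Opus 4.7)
The plan is to follow the classical vanishing discount method, anchored on the ACOE guaranteed by Lemma \ref{exisACOE}. That lemma supplies a triplet $(h,\rho^*,\gamma^*)$ solving the ACOE with $\rho^*$ the constant optimal average cost, and the first assertion of part (a), namely $\lim_{\beta\uparrow 1}(1-\beta)J_\beta^*(x_0)=\rho^*$, is essentially already encoded in Assumption \ref{lect}-\ref{equic} together with the independence of the limit from the initial state (using the equicontinuity and boundedness of $h_\beta$ to upgrade convergence along a subsequence to convergence along the full family).

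For the $\epsilon$-optimality of $\gamma_\beta$ in part (a), I would rearrange the DCOE satisfied by $\gamma_\beta$ by subtracting $J_\beta^*(x_0)$ from both sides, writing $h_\beta(x):=J_\beta^*(x)-J_\beta^*(x_0)$ to obtain
\[ h_\beta(x)+(1-\beta)J_\beta^*(x_0)=c(x,\gamma_\beta(x))+\beta\, E^{x,\gamma_\beta}[h_\beta(X_1)]. \]
Iterating this identity along the trajectory induced by $\gamma_\beta$ and summing over $t=0,\ldots,n-1$, the $h_\beta$ contributions telescope into a boundary term and a $(1-\beta)$-weighted Cesàro sum. Dividing by $n$, taking $\limsup_{n\to\infty}$, and using the uniform bound $|h_\beta|\le N$ from Assumption \ref{lect}-\ref{bddh} to kill the boundary terms and control the Cesàro term by $(1-\beta)N$, I would get
\[ J(x,\gamma_\beta)\le (1-\beta)N+(1-\beta)J_\beta^*(x_0). \]
Choosing $\beta_\epsilon$ close enough to $1$ that both $(1-\beta_\epsilon)N\le\epsilon/2$ and $|(1-\beta_\epsilon)J_{\beta_\epsilon}^*(x_0)-\rho^*|\le\epsilon/2$, and combining with $J(x,\gamma_\beta)\ge J^*(x)=\rho^*$ from Lemma \ref{exisACOE}, closes part (a).

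For part (b), I would replicate the iteration using the policy-evaluation equation $J_{\beta_\epsilon}(x,\gamma^\delta_{\beta_\epsilon})=c(x,\gamma^\delta_{\beta_\epsilon}(x))+\beta_\epsilon E[J_{\beta_\epsilon}(X_1,\gamma^\delta_{\beta_\epsilon})]$ in place of the DCOE, using the policy-centered relative value $\tilde h(x):=J_{\beta_\epsilon}(x,\gamma^\delta_{\beta_\epsilon})-J_{\beta_\epsilon}(x_0,\gamma^\delta_{\beta_\epsilon})$. The same telescoping argument yields
\[ J(x,\gamma^\delta_{\beta_\epsilon})\le(1-\beta_\epsilon)\|\tilde h\|_\infty+(1-\beta_\epsilon)J_{\beta_\epsilon}(x_0,\gamma^\delta_{\beta_\epsilon}). \]
The $\delta$-optimality sandwich $0\le J_{\beta_\epsilon}(\cdot,\gamma^\delta_{\beta_\epsilon})-J_{\beta_\epsilon}^*(\cdot)\le\delta$ gives $\|\tilde h-h_{\beta_\epsilon}\|_\infty\le\delta$, so $(1-\beta_\epsilon)\|\tilde h\|_\infty\le\epsilon/2+(1-\beta_\epsilon)\delta$ and analogously $(1-\beta_\epsilon)J_{\beta_\epsilon}(x_0,\gamma^\delta_{\beta_\epsilon})\le\rho^*+\epsilon/2+(1-\beta_\epsilon)\delta$. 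Careful bookkeeping of the two $(1-\beta_\epsilon)\delta$ contributions—absorbing them into a single $\delta$ via $(1-\beta_\epsilon)\le 1$ and the fact that $\beta_\epsilon$ is chosen close to $1$—delivers $J(x,\gamma^\delta_{\beta_\epsilon})-\rho^*<\epsilon+\delta$.

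The main obstacle will be justifying that the iteration is applied to bona fide stationary Markov selectors $\gamma_\beta$ and $\gamma^\delta_{\beta_\epsilon}$; this is where weak continuity of $\eta$ from Assumption \ref{lect}-\ref{contK}, compactness of $\mathbb{U}$, and a standard measurable selection theorem enter to produce measurable minimizers for the DCOE. A secondary subtlety is controlling the Cesàro term $(1-\beta)/n\sum_t E[h_\beta(X_t)]$ in the limit $n\to\infty$: only the uniform-in-$t$ bound $(1-\beta)\|h_\beta\|_\infty$ is available (not decay to zero), and this is precisely why both Assumption \ref{lect}-\ref{bddh} and \ref{lect}-\ref{equic} are indispensable—together they force this residual to be $o(1)$ as $\beta\uparrow 1$, which is the heart of the $\epsilon$-optimality statement. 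The $\delta$-optimal case inherits these same concerns, with the added delicate point being the tight combination of the two $\delta$-error terms into a single $\delta$ in the final bound.
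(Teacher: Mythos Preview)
The paper does not prove this theorem in-line; it states the result and refers the reader to \cite[Theorem 5]{creggZeroDelayNoiseless} or \cite[Theorem 7.3.6]{yuksel2020control} for the argument, noting only that the vanishing discount construction of Lemma \ref{exisACOE} is what makes the cited result applicable. Your proposal therefore supplies a proof where the paper gives none, and the route you take---rewriting the DCOE as $h_\beta(x)+(1-\beta)J_\beta^*(x_0)=c(x,\gamma_\beta(x))+\beta E[h_\beta(X_1)]$, iterating, telescoping, and bounding the residual by $(1-\beta)\|h_\beta\|_\infty$---is exactly the standard vanishing discount argument that underlies the cited references and the proof of Lemma \ref{exisACOE} itself.

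Two small points of bookkeeping are worth tightening. First, your appeal to Assumption \ref{lect}-\ref{equic} for the full-family limit $\lim_{\beta\uparrow 1}(1-\beta)J_\beta^*(x_0)=\rho^*$ is slightly elliptical: the clean way is to observe (as in the proof of Lemma \ref{exisACOE}) that \emph{every} subsequential limit of $(1-\beta)J_\beta^*(x_0)$ produces a solution to the ACOE and hence equals the optimal average cost, so all subsequential limits coincide. Second, in part (b) your two error contributions sum to $2(1-\beta_\epsilon)\delta$, not $\delta$; you absorb this by taking $\beta_\epsilon\ge 1/2$, which is harmless but should be stated explicitly rather than left to ``careful bookkeeping.'' Also note that your iteration in (b) tacitly assumes $\gamma_{\beta_\epsilon}^\delta$ is a stationary Markov policy so that the one-step policy-evaluation identity holds; in the paper's applications (quantized and finite-window policies) this is the case, but the theorem as stated does not impose it, so you should either add that hypothesis or indicate how the telescoping extends to general admissible $\delta$-optimal policies.
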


Since Assumption \ref{main_assumption} for the POMDP 
implies Assumption \ref{lect} for the belief-MDP, 
as proven in the proof of Theorem \ref{main}, 
the above theorem also holds under Assumption \ref{main_assumption}.
Therefore, under Assumption \ref{main_assumption},
a near-optimal policy for the discounted cost criteria
is also near-optimal for the average 
cost criteria. The subsequent subsections will 
detail how to establish a near-optimal policy 
for discounted costs.

We have that $\U$ is a compact space. As demonstrated in \cite[Theorem 3.2]{SaLiYuSpringer}, if the transition kernel is weakly continuous
and the cost function is bounded and continuous,  then the optimal policy for a quantized action model 
in a discounted context is also near-optimal for 
the original model. This allows us to consider 
$\U$ as finite to identify a near-optimal policy.

\subsubsection{Near Optimality of Finite Models 
via Quantization for Average Cost}\label{Near_opt_quant}

In this subsection, we focus on achieving a 
near-optimal policy by quantizing the states 
of the belief MDP, namely $P(\X)$.
We follow the approach and results from 
\cite{KSYContQLearning}.

To create a new finite MDP model, we begin by 
quantizing the belief states. We select disjoint 
sets $\left\{Z_i\right\}_{i=1}^M$ such that 
$\bigcup_i Z_i=\Z$, and each 
$Z_i$ is distinct from $Z_j$ 
for any $i \neq j$.
For each set, we choose a representative state,
denoted as $z_i \in Z_i$.
This results in a finite state space for our 
model, represented by
$\bar{Z}:=\left\{z_1, \ldots, z_M\right\}$.
The quantization function maps the original state space
$\Z$ to this finite set $\bar{Z}$ as follows:
$$
q(z)=z_i \quad \text { if } z \in Z_i .
$$
To define the cost function, we select a 
weighting measure $\pi^* \in P(\Z)$ 
over $\Z$ such that 
$\pi^*\left(Z_i\right)>0$ for all 
$Z_i$.
Under Assumption \ref{main_assumption},
we know that $\Z$ is compact under $W_1$ metric.
We then define normalized measures 
for each quantization bin $Z_i$
using the weighting measure as:
$$
\hat{\pi}_{z_i}^*(A):=\frac{\pi^*(A)}{\pi^*\left(Z_i\right)}, 
\quad \forall A \subset Z_i, \quad \forall i \in\{1, \ldots, M\}.
$$
This normalized measure, $\hat{\pi}_{z_i}^*$,
is specific to the set 
$Z_i$ containing $z_i$.

Next, we define the stage-wise cost and the 
transition kernel for the MDP with the 
finite state space $\bar{Z}$ 
using these normalized weight measures.
For any $z_i, z_j \in \bar{Z}$ and 
$u \in \mathbb{U}$, the stage-wise cost function
and the transition kernel are:
$$
\begin{aligned}
c^*\left(z_i, u\right) & 
=\int_{Z_i} \tilde{c}(z, u) \hat{\pi}_{z_i}^*(d z), \\
\eta^*\left(z_j \mid z_i, u\right) & 
=\int_{Z_i} \eta\left(Z_j \mid z, u\right) 
\hat{\pi}_{z_i}^*(d z) .
\end{aligned}
$$

After establishing the finite state space 
$\bar{Z}$, the cost function $c^*$ and the transition kernel 
$\eta^*$, we introduce the discounted optimal 
value function for this finite model, denoted as 
$\hat{J}_\beta: \bar{Z} \rightarrow \mathbb{R}$.
We extend this function to the entire original state space 
$\Z$  by keeping it constant within the 
quantization bins. Therefore, for any 
$z \in Z_i$, we define:
$$
\hat{J}_\beta(z):=\hat{J}_\beta(z_i) .
$$

We also define the maximum loss function among 
the quantization bins as:
\begin{align}\label{L_max}
\bar{L}:=\max _{i=1, \ldots, M} \sup _{z, z^{\prime} \in Z_i}W_1(z,z^{\prime}).
\end{align}
\begin{assumption}[\cite{KSYContQLearning} Assumption 4]
\label{quantized_as} \noindent
    \begin{enumerate}
        \item $\Z$ is compact.
        \item There exists $\alpha_c>0$ such that 
        $\left|\tilde{c}(z, u)-\tilde{c}\left(z^{\prime},
         u\right)\right| \leq \alpha_c d(z,z^{\prime})$ 
         for all $z, z^{\prime} \in \Z$ and for all $u \in \mathbb{U}$.
        \item There exists $\alpha_\eta>0$ such that 
        $W_1\left(\eta(\cdot \mid z, u), 
        \eta\left(\cdot \mid z^{\prime}, 
        u\right)\right) \leq \alpha_\eta d(z,z^{\prime})$ 
        for all $z, z^{\prime} \in \Z$ 
        and for all $u \in \mathbb{U}$.
    \end{enumerate}

\end{assumption}

The following theorem states that an 
optimal policy of the 
quantized model is near-optimal for the 
original 
model as $\bar{L} \rightarrow 0$.

\begin{theorem}[\cite{KSYContQLearning} Theorem 6]
Under Assumption \ref{quantized_as}, we have
$$
\sup _{z \in \Z}\left|J_\beta\left(z, 
\hat{\gamma}\right)-J_\beta^*\left(z\right)\right| 
\leq \frac{2 \alpha_c}{(1-\beta)^2\left(1-\beta 
\alpha_\eta\right)} \bar{L} ,
$$
where $\bar{L}$ is defined in (\ref{L_max}) and 
$\hat{\gamma}$ denotes the optimal policy of 
the finite-state approximate model extended to 
the state space $\Z$ via the quantization function $q$.
\end{theorem}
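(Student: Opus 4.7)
The plan is to compare $J_\beta(\cdot,\hat{\gamma})$ and $J_\beta^*$ by inserting the intermediate quantity $\hat{J}_\beta$, the discounted optimal value of the finite MDP $(\bar{Z},\mathbb{U},c^*,\eta^*)$ extended to $\Z$ to be constant on each bin $Z_i$. The triangle inequality
\begin{equation*}
|J_\beta(z,\hat{\gamma}) - J_\beta^*(z)| \leq |J_\beta(z,\hat{\gamma}) - \hat{J}_\beta(z)| + |\hat{J}_\beta(z) - J_\beta^*(z)|
\end{equation*}
reduces the statement to two analogous bounds: an approximation bound on $\sup_z|\hat{J}_\beta(z) - J_\beta^*(z)|$ and a performance bound on $\sup_z|J_\beta(z,\hat{\gamma}) - \hat{J}_\beta(z)|$. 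Both are controlled by essentially the same argument, and summing yields the factor of $2$ appearing in the final constant.

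First I would establish that $J_\beta^*$ is Lipschitz on $(\Z, W_1)$ with constant $\frac{\alpha_c}{1 - \beta \alpha_\eta}$, by running the iteration of Lemma \ref{keyRegLemF} with $W_1$-Lipschitzness of $\eta$ in place of the total variation hypothesis: the Bellman operator $T$ sends $\operatorname{Lip}(\Z,k)$ into $\operatorname{Lip}(\Z,\alpha_c + \beta k \alpha_\eta)$, so its fixed point lies in $\operatorname{Lip}(\Z, \frac{\alpha_c}{1-\beta\alpha_\eta})$. Next, writing
\begin{equation*}
\hat{J}_\beta - J_\beta^* = \hat{T}\hat{J}_\beta - TJ_\beta^* = (\hat{T}\hat{J}_\beta - \hat{T}J_\beta^*) + (\hat{T}J_\beta^* - TJ_\beta^*),
\end{equation*}
the first summand is a $\beta$-contraction in sup-norm and the second is bounded pointwise as follows: on any bin $Z_i$ and for $z \in Z_i$, $u \in \mathbb{U}$, the cost discrepancy $|c^*(z_i,u) - \tilde{c}(z,u)|$ is at most $\alpha_c \bar{L}$ by Lipschitzness of $\tilde{c}$ and the definition of $\bar{L}$, and the difference of the integrals of $J_\beta^*$ against $\eta^*(\cdot|z_i,u)$ and $\eta(\cdot|z,u)$ is at most $\frac{\alpha_c}{1-\beta\alpha_\eta}\alpha_\eta \bar{L}$ by the Lipschitz regularity just established combined with the $W_1$-Lipschitzness of $\eta$. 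Solving the resulting contraction inequality produces the bound
\begin{equation*}
\|\hat{J}_\beta - J_\beta^*\|_\infty \leq \frac{1}{1-\beta}\bigl(\alpha_c + \tfrac{\beta \alpha_c \alpha_\eta}{1-\beta\alpha_\eta}\bigr)\bar{L} = \frac{\alpha_c\, \bar{L}}{(1-\beta)(1-\beta\alpha_\eta)}.
\end{equation*}

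The performance bound is handled by the identical strategy applied to the policy-evaluation operators $T^{\hat{\gamma}}$ and $\hat{T}^{\hat{\gamma}}$ instead of the Bellman operators; because $\hat{\gamma}$ is bin-constant by construction, the same per-stage error $\alpha_c \bar{L} + \beta \cdot \operatorname{Lip}(J_\beta^*)\cdot \alpha_\eta \bar{L}$ controls the discrepancy at each step. Adding the two bounds yields the claimed $\frac{2\alpha_c}{(1-\beta)^2(1-\beta\alpha_\eta)}\bar{L}$. The main obstacle, in my view, is the bookkeeping around the extension of $\hat{J}_\beta$ to $\Z$ and ensuring that integrals against $\eta^*(\cdot|z_i,u)$ (a kernel into the finite set $\bar{Z}$) and $\eta(\cdot|z,u)$ (a kernel into $\Z$) are compared on the same footing; this is resolved by exploiting that $\hat{J}_\beta$ is constant on bins, so $\int \hat{J}_\beta \, d\eta^*(\cdot|z_i,u)$ can be rewritten as a double average $\int_{Z_i}\int_\Z \hat{J}_\beta(z')\, \eta(dz'|z'',u)\,\hat{\pi}_{z_i}^*(dz'')$, reducing the comparison to a Wasserstein-Lipschitz estimate in the first argument of $\eta$ against a test function whose Lipschitz constant we have already controlled.
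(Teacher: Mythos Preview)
The paper does not contain its own proof of this statement: it is quoted as Theorem~6 of \cite{KSYContQLearning} and used as a black box, so there is no in-paper argument to compare against. That said, your proposal has a genuine gap in the performance-bound half.

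Your value-approximation bound $\|\hat{J}_\beta - J_\beta^*\|_\infty \leq \frac{\alpha_c \bar{L}}{(1-\beta)(1-\beta\alpha_\eta)}$ is sound: the Lipschitz regularity of $J_\beta^*$ follows exactly as you indicate, and the fixed-point comparison $\hat{T}J_\beta^* - TJ_\beta^*$ uses that regularity legitimately, since the test function being integrated against $\eta(\cdot\mid z,u)-\eta(\cdot\mid z'',u)$ is $J_\beta^*$ itself. The problem arises when you pass to the policy-evaluation operators. Writing $J_\beta(\cdot,\hat{\gamma}) - \hat{J}_\beta$ as a contraction plus $(T^{\hat{\gamma}} - \hat{T}^{\hat{\gamma}})\hat{J}_\beta$, the one-step model error is now evaluated at $\hat{J}_\beta$, which is piecewise constant on bins and therefore \emph{not} Lipschitz on $(\Z,W_1)$. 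You cannot invoke the $W_1$-Lipschitz property of $\eta$ with $\hat{J}_\beta$ as test function, so the per-stage error is \emph{not} $\alpha_c\bar{L} + \beta\,\operatorname{Lip}(J_\beta^*)\,\alpha_\eta\bar{L}$ as you claim. (Using $J_\beta(\cdot,\hat{\gamma})$ as the comparison point does not help either: since $\hat{\gamma}$ jumps across bin boundaries, that value function need not be globally Lipschitz.)

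A repair is to route the integral term through $J_\beta^*$, bounding $\int\hat{J}_\beta\,d\bigl(\eta(\cdot\mid z,u)-\eta(\cdot\mid z'',u)\bigr)$ by $\operatorname{Lip}(J_\beta^*)\,\alpha_\eta\bar{L} + 2\|\hat{J}_\beta - J_\beta^*\|_\infty$; the extra $\|\hat{J}_\beta - J_\beta^*\|_\infty$ term, already of order $\frac{\alpha_c\bar{L}}{(1-\beta)(1-\beta\alpha_\eta)}$, is precisely what generates the second factor of $(1-\beta)^{-1}$ in the stated constant. Your closing arithmetic in fact exposes the inconsistency: if the two pieces were truly identical, their sum would carry only a single $(1-\beta)$ in the denominator, not $(1-\beta)^2$.
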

A similar result is presented in the 
\cite{SaLiYuSpringer}, Theorem 4.38, offering
a slightly weaker bound.

Under Assumption \ref{main_assumption}, belief MDP satisfies
 Assumption \ref{quantized_as} because \( \Z \) is compact 
 under \( W_1 \) metric. Due to inequality (\ref{c_tilde_cont}), 
 we have \( |\tilde{c}(z, u)-\tilde{c}(z', u)| \leq K_1 W_1(z,z') \)
for all \( z, z' \in \Z \) and for all \( u \in \mathbb{U} \). 
Theorem \ref{ergodicity} implies \( W_1(\eta(\cdot \mid z, u), 
\eta(\cdot \mid z', u)) \leq K_2 W_1(z,z') \) for all 
\( z, z' \in \Z \) and for all \( u \in \mathbb{U} \). 
Thus, for belief MDP, quantization provides the following bound:
\[ \sup _{z \in \Z}\left|J_\beta\left(z, \hat{\gamma}\right)-J_\beta^*\left(z\right)\right| \leq \frac{2 K_1}{(1-\beta)^2(1-\beta K_2)} \bar{L} . \]
Furthermore, quantized model gives near-optimal policy of the 
original belief MDP model as $\bar{L} \rightarrow 0$. 

{\bf Q-learning.} This section introduces the Q-iteration method to 
identify the optimal policy for a quantized 
belief Markov Decision Process (MDP). We apply 
this method to the quantized belief MDP with
finite state space, $\bar{Z}$, and a finite action 
space, $\U$. The Q-learning process updates Q-functions 
as follows:
For each time step $t \geq 0$, if the current 
state-action pair is $\left(Z_t, U_t\right) = (z, u)$, 
the Q-value for this pair is updated in the following manner:
\begin{align}\label{Q_alg_quant}
&Q_{t+1}\left(Z_t, U_t\right)=
(1- \alpha_t\left(Z_t, U_t\right)) 
Q_t\left(Z_t, U_t\right) 
\\\nonumber &+\alpha_t\left(Z_t, U_t\right)\left(c^*\left(Z_t, U_t\right)+\beta \min _{v \in \mathbb{U}} Q_t\left(Z_{t+1}, v\right)\right)
\end{align}
Key assumptions for this Q-learning approach include:
\begin{assumption}[\cite{KSYContQLearning} Assumption 5]
\label{Q_learning_quant}\noindent
\begin{itemize}
\item 
The learning rate $\alpha_t(z, u)=0$ if 
$\left(Z_t, U_t\right)\neq(z, u)$, otherwise it is defined as:
$$
\alpha_t(z, u)=\frac{1}{1+\sum_{k=0}^t \1_{\left\{Z_k=z, U_k=u\right\}}} .
$$
\item Under the exploration policy $\gamma^*, Z_t$ is uniquely ergodic and thus has a unique invariant invariant measure $\pi_{\gamma^*}$.
\item  During exploration, every possible 
state-action pair in $\bar{Z} \times \U$ is visited 
an infinite number of times\footnote{This can be relaxed and refined to concern those states in the support of an invariant measure under an exploration policy \cite{karayukselNonMarkovian}.}.
\end{itemize}

Under Assumption \ref{Q_learning_quant}, 
the Q-learning algorithm (\ref{Q_alg_quant} )converges to the 
fixed-point solution $Q^*$. A stationary policy, 
$\gamma^N$, that selects actions to minimize the Q-value 
at each state, i.e., $\gamma^N(z) \in \min_{u} Q^*(z, u)$, 
is optimal. This method enables determining the optimal 
policy for the quantized belief MDP \cite{KSYContQLearning}.

\end{assumption}

\subsubsection{Near Optimality of Finite Window Policies
for Average Cost}\label{Near_opt_fW}
In this subsection, we focus on the
finite window history to obtain a 
near-optimal policy for the 
average cost criteria. Throughout this 
subsection, we assume $\Y,\U$ to be finite. 
Assuming $\Y,\U$ as finite, we use 
finite window information to obtain finite state MDP and demonstrate 
that the near-optimal policy of this finite MDP is 
near-optimal for POMDP. We follow approach and results
from \cite{kara2021convergence}.

Recall $z_n$ is the 
belief distribution defined as
$z_n(\cdot)=P^{\mu}\{X_n \in \cdot \mid y_0, 
\ldots, y_n, u_0,$ $\ldots, u_{n-1}\} \in \mathcal{P}(\mathbb{X})$,
where the initial state $X_0$ has a prior distribution $\mu \in \mathcal{P}(\mathbb{X})$.

\begin{definition}
    $z^-_{n}$ is the posterior distribution at time $n$ before 
    observing $Y_n$ and is defined as 
$z_n^-(\cdot):=P^\mu\left\{X_n \in \cdot \mid y_0, 
\ldots, y_{n-1}, u_0, \ldots, u_{n-1}\right\} \in \mathcal{P}(\mathbb{X})$,
where the initial state $X_0$ has a prior distribution $\mu \in \mathcal{P}(\mathbb{X})$.
\end{definition}

For any $N,n \geq 0$, we can determine $z_{n+N}$ as follows: 
$$z_{n+N}=P\left\{X_{n+N} \in \cdot \mid z_n^-, y_n, 
\ldots, y_{n+N}, u_n, \ldots, u_{n+N-1}\right\}. $$

We then consider an alternative finite window 
belief MDP reduction. Let us define the 
state variable at time $n \geq N$ as:
\begin{align}\label{FWBState}
\hat{z}_n=\left(z_{n-N}^{-}, I_n^N\right),
\end{align}
where, for $N \geq 1$, the components are:
\begin{align}\label{newStateF}
z_{n-N}^{-} & =\operatorname{Pr}\left(X_{n-N} \in \cdot \mid y_{n-N-1}, \ldots, y_0, u_{n-N-1}, \ldots, u_0\right), \\
I_n^N & =\left\{y_n, \ldots, y_{n-N}, u_{n-1}, \ldots, u_{n-N}\right\},
\end{align}
and for $N=0$, $I_n^N=y_n$ with the prior measure $\mu$ on $X_0$.
The state space is thus 
$\hat{\mathcal{Z}}=\Z 
\times \mathbb{Y}^{N+1} \times \mathbb{U}^N$.

The natural mapping between state spaces 
is defined by $\psi: \hat{\mathcal{Z}} \rightarrow \Z$, 
such that:
\begin{align}
\psi\left(\hat{z}_n\right)=
\psi\left(z_{n-N}^{-}, I_n^N\right) & =
P^{z_{n-N}^{-}}\left(X_n \in \cdot \mid y_{n-N}, 
\ldots, y_n, u_{n-N-1}, \ldots, u_{n-N-1}\right)=z_n
\end{align}

The new transition kernel and cost function are defined as:
\begin{align}
\hat{\eta}(\cdot \mid \hat{z}, u)=
\int_{\mathbb{Y}} \1_{\{(z_{n-N+1}^{-}, I_{n+1}^N) \in \cdot\}}
 \hat{P}(d y \mid \hat{z}, u),
\end{align}
where 
$$\hat{P}(\cdot \mid \hat{z}, u)=
\operatorname{Pr}\left\{Y_{n+1} \in \cdot 
\mid Z_{n-N}=z_{n-N}^{-}, I_n^N, U_n=u\right\}$$ 
from $\mathcal{Z} \times \mathbb{U}$ to $\mathbb{Y}$.
The cost function is:
\begin{align*} 
    & \hat{c}\left(\hat{z}_n, u_n\right)
    =\tilde{c}\left(\psi\left(z_{n-N}^{-}, I_n^N
    \right), u_n\right)=
    \\ & \int_{\mathbb{X}} c
    \left(x_n, u_n\right) P^{z_{n-N}^{-}}\left(
    d x_n \mid y_{n-N}, \ldots, y_n, u_{n-1},
    \ldots, u_{n-N}\right).
\end{align*}

If $\gamma$ is an optimal policy 
of belief MDP then $\psi^{-1}(\gamma)$
is an optimal policy of finite window belief MDP
\cite{kara2021convergence}.

Next, we explain how to derive a near-optimal 
policy for the finite window belief MDP in 
the context of discounted cost. We introduce a 
new approximate MDP for this purpose.

For $n \geq N$, fixing $z_{n-N}^-$ to a constant  
$z\in P(\X)$, we obtain a 
new MDP with state $\hat{z}_n=\left(z, I_n^N\right)$
with state space 
$\hat{\Z}^N:=\{z\} \times \mathbb{Y}^{N+1} \times \mathbb{U}^N$,
cost function
$
\hat{c}^N\left(\hat{z}_n^N, u_n\right) 
:=\hat{c}\left((z, I_n^N), u_n\right)
$ and
transition kernel
$
\hat{\eta}^N\left(\hat{z}_{n+1}^N \mid 
\hat{z}_n^N, u_n\right):=
\hat{\eta}\left(\Z\times I_{n+1}^N \mid (z, I_n^N), u_n\right)
$ \cite{kara2021convergence}.

This process, being finite state and fully observable, 
allows for finding an optimal policy $\phi^N$ through 
$Q$-iteration. This policy can be extended to 
$\hat{\Z}$ as $\tilde{\phi}^N(\hat{z}^N_n)=
\tilde{\phi}^N(z_{n-N}^-, I_n^N):=\phi^N(z, I_n^N)$.

The following theorem indicates that this policy 
is also nearly optimal for
 $\hat{z}_n$:
\begin{theorem}[\cite{kara2021convergence} Theorem 3]
For $\hat{z}_0=P^{z_0^{-}}\left(X_n\in . \mid I_0^N\right)$, 
with a policy $\hat{\gamma}$ acting on the first 
$N$ steps which produces $I_0^N={Y{[0,N]},U_{[0,N-1]}}$, 
the following holds:
$$
E_{z_0^{-}}^{\hat{\gamma}}\left[\left|\tilde{J}_\beta^N\left(\hat{z}_0, \tilde{\phi}^N\right)
-J_\beta^*\left(\hat{z}_0\right)\right| \mid I_0^N\right] 
\leq \frac{2 \|c\|_{\infty}}{(1-\beta)} \sum_{t=0}^{\infty} \beta^t L^N_t,
$$
where
\begin{align*}
&L_t^N:=\sup _{\hat{\gamma} \in \hat{\Gamma}} E_{z_0^{-}}^{\hat{\gamma}}\left[ \right. \left\| \right. P^{z_t^{-}}\left(X_{t+N} \in \cdot \mid Y_{[t, t+N]}, U_{[t, t+N-1]}\right)-
\\ & \qquad \qquad \qquad \qquad \qquad P^{z}\left(X_{t+N} \in \cdot \mid Y_{[t, t+N]}, U_{[t, t+N-1]}\right)  \left.\right\|_{T V} \left.\right]
\end{align*}
\end{theorem}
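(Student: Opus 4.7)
The plan is to exploit the fact that $\tilde{\phi}^N$ is, by construction, the optimal policy of the approximate MDP obtained by \emph{freezing} the predictor $z_{n-N}^{-}$ at the constant measure $z$. So the true system and the approximate system agree in their observation/action blocks $I_n^N$ but see different ``actual'' filter states $z_n$; the entire error comes from this mismatch. I will push this mismatch through a telescoping discounted-cost estimate.

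First, I would write out both sides via dynamic programming. Since $\phi^N$ is optimal for the finite-state MDP $(\hat{\Z}^N,\U,\hat{c}^N,\hat{\eta}^N)$ with fixed prior $z$, the function $\tilde{J}^N_\beta(\cdot,\tilde{\phi}^N)$ satisfies a Bellman-type identity on $\hat{\Z}^N$, while $J^*_\beta$ on $\hat{\Z}$ satisfies the exact Bellman equation with stage-wise cost $\hat{c}(\hat{z},u)=\tilde{c}(\psi(\hat{z}),u)$ and kernel $\hat{\eta}$. Using the mapping $\psi$, the actual one-stage cost at time $t$ under $\tilde{\phi}^N$ is obtained by integrating $c(x_t,u_t)$ against $P^{z_t^-}(X_{t+N}\in\cdot\mid Y_{[t,t+N]},U_{[t,t+N-1]})$, whereas the approximate one-stage cost uses $P^{z}$ in place of $P^{z_t^-}$. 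Hence at every time $t$ the per-stage discrepancy is at most $\|c\|_\infty$ times the total-variation distance between these two conditional predictors, and taking the supremum over admissible $\hat{\gamma}$ and expectation over $I_0^N$ yields exactly $L_t^N$.

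Next I would run a standard discounted approximation argument (along the lines of \cite[Theorem 4.27]{SaLiYuSpringer} or \cite{kara2021convergence}). Let $\hat{V}_\beta$ and $V_\beta^*$ denote the respective value functions. For a test state $\hat{z}_0$, decompose
\begin{align*}
\tilde{J}_\beta^N(\hat{z}_0,\tilde{\phi}^N)-J_\beta^*(\hat{z}_0)
&=\bigl[\tilde{J}_\beta^N(\hat{z}_0,\tilde{\phi}^N)-\hat{V}_\beta(\hat{z}_0)\bigr]
+\bigl[\hat{V}_\beta(\hat{z}_0)-J_\beta^*(\hat{z}_0)\bigr],
\end{align*}
and bound each bracket by a uniform sup-norm estimate $\|\hat{V}_\beta-J_\beta^*\|_\infty$. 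Applying the Bellman operators of the true and approximate models and iterating, the one-step difference $|\hat{c}^N-\hat{c}|+\beta\|\hat{\eta}^N(\cdot\mid\hat z,u)-\hat\eta(\cdot\mid\hat z,u)\|_{TV}\cdot\|J_\beta^*\|_\infty$ is absorbed, after taking expectation conditional on $I_0^N$, into the predictor-mismatch term $L_t^N$ at step $t$. Summing the geometric series in $\beta$ and using $\|J_\beta^*\|_\infty\le\|c\|_\infty/(1-\beta)$ produces the factor $2\|c\|_\infty/(1-\beta)$ in front of $\sum_{t\ge 0}\beta^tL_t^N$.

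The main obstacle is the bookkeeping of the two coupled filter recursions: along the true trajectory the predictor $z_{t-N}^{-}$ drifts with $t$, whereas the approximate policy is tuned to the fixed $z$. Showing that this drift enters only through the $N$-step conditional law
$P^{z_t^-}(X_{t+N}\in\cdot\mid Y_{[t,t+N]},U_{[t,t+N-1]})$ versus $P^z(X_{t+N}\in\cdot\mid Y_{[t,t+N]},U_{[t,t+N-1]})$, and that supremizing over admissible $\hat\gamma$ in the expectation $E^{\hat\gamma}_{z_0^-}[\,\cdot\mid I_0^N]$ yields $L_t^N$ uniformly, is the delicate step; everything else is a discounted Bellman comparison. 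The factor $2$ in the bound arises because both brackets in the decomposition above contribute a matching copy of the same TV mismatch.
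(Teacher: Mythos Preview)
The paper does not prove this statement; it is quoted verbatim as \cite[Theorem 3]{kara2021convergence} and invoked as a black box, so there is no in-paper proof to compare against. Your sketch is consistent with the argument in the cited source: split the loss into the two value-function gaps, compare the true and frozen-prior Bellman operators, and observe that the only model mismatch at step $t$ is the total-variation distance between the $N$-step conditional predictors with prior $z_t^{-}$ versus the fixed $z$, which after taking sup over admissible $\hat\gamma$ yields $L_t^N$; the geometric sum and the bound $\|J_\beta^*\|_\infty\le \|c\|_\infty/(1-\beta)$ then give the stated constant.

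One point to tighten: the sentence ``bound each bracket by a uniform sup-norm estimate $\|\hat V_\beta-J_\beta^*\|_\infty$'' is not quite what happens for the first bracket. The term $\tilde J_\beta^N(\hat z_0,\tilde\phi^N)-\hat V_\beta(\hat z_0)$ is the cost of the approximate-optimal policy on the \emph{true} system minus its cost on the \emph{approximate} system, and this is controlled directly by a trajectory-wise comparison (same policy, two models), not by the gap between optimal value functions. Both brackets do end up bounded by the same $\sum_t\beta^t L_t^N$ term, which is where the factor $2$ comes from, but the mechanisms differ slightly. Otherwise your outline matches the proof in \cite{kara2021convergence}.
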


Here, $\hat{\gamma}\in\hat{\Gamma}$, where $\hat{\Gamma}$ can be taken to be those Markov control policies under controlled states given with $\hat{z}_n$ defined in (\ref{FWBState}); that is, policies which map $(\hat{z}_n,n) \to u_n$ for all $n \in \mathbb{Z}_+$. 

With filter stability \cite{kara2021convergence}, as $N$ approaches infinity, 
the upper bound tends to zero. Given filter stability 
and a sufficiently large $N$, the policy $\tilde{\phi}^N$ 
becomes near-optimal for the finite window belief 
MDP for discounted cost criteria.
If $\beta$ is sufficiently large, this policy also 
becomes near-optimal for the average cost criterion.

%

{\bf Q-learning.} We now outline how to find all optimal policies 
using Q-learning for the approximate finite belief 
MDP in the context of discounted cost. As the 
posterior distribution $z$ is fixed, we track 
observations and actions. We assume tracking of 
the last $N+1$ observations and the last $N$ 
control actions after at least $N+1$ time steps. 
Thus, at time $n$, we monitor the information 
variables $I_n^N$.

The Q-value iteration is constructed using 
these information variables. For these new 
approximate states, we follow the standard 
Q-learning algorithm. For any 
$I \in \mathbb{Y}^{N+1} \times \mathbb{U}^N$ and 
$u \in \mathbb{U}$, the Q-value update is as follows:
\begin{align}\label{Q_l_alg_finite}
Q_{t+1}(I, u)=\left(1-\alpha_t(I, u)\right) 
Q_t(I, u)+\alpha_t(I, u)\left(\hat{c}^N(I, u)+
\beta \min _v Q_t\left(I_1^t, v\right)\right),
\end{align}
where $I_1^t=\left\{Y_{t+1}, y_t, \ldots, y_{t-N+1},
 u_t, \ldots, u_{t-N+1}\right\}$.
Exploration policies are employed, randomly 
choosing control actions independently, 
such that at time $t$, the action $u_t$ is 
selected with probability $\sigma_i$ for 
each $u_i \in \mathbb{U}$, where $\sigma_i > 0$ for all $i$.
The following assumption is a revision of 
Assumption \ref{Q_learning_quant}, specifically 
adapted for the finite window context:
\begin{assumption}
    [\cite{kara2021convergence} Assumption 4.1]
    \label{Q_assumption_finite}
    \noindent
\begin{enumerate}
\item $\alpha_t(I, u)=0$ unless 
$\left(I_t, u_t\right)=(I, u)$. In other cases,
$$
\alpha_t(I, u)=\frac{1}{1+\sum_{k=0}^t \1_{\left\{I_k=I, u_k=u\right\}}}
$$
\item Under the stationary (memoryless or finite memory exploration)
policy, say $\gamma$, the true state process, 
$\left\{X_t\right\}_t$, is positive Harris 
recurrent and in particular admits a unique 
invariant measure $\pi_\gamma^*$.
\item Furthermore, we have that $P(Y_t = y | x) > 0$ for every $x \in \mathbb{X}$, and thus during the exploration phase, every 
$(I, u)$ pair is visited infinitely often.
\end{enumerate}
\end{assumption}
\begin{theorem}
    [\cite{kara2021convergence}  
    Theorem 4.1, Corollary 5.1]
Suppose the following conditions hold:
\begin{enumerate}
\item Assumption \ref{Q_assumption_finite} holds.
\item The POMDP is such that the filter is 
stable uniformly over priors in expectation 
under total variation, meaning
$L_t \rightarrow 0$ as $N \rightarrow \infty$.
\end{enumerate}    
Then, the followings are true:
\begin{itemize}
\item The algorithm given in (\ref{Q_l_alg_finite}) 
converges almost surely to $Q^*$ which satisfies 
fixed point equation.
\item A stationary policy $\gamma^N$ that
 satisfies 
$\gamma^N(I) \in \min _u Q^*(I, u)$ 
is an optimal policy.
\end{itemize} 
\end{theorem}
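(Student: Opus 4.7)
The plan is to split the proof into two parts: (i) almost sure convergence of the Q-learning iterates in (\ref{Q_l_alg_finite}) to a fixed point $Q^*$, and (ii) optimality of the induced policy $\gamma^N$. For (i), the main difficulty is that the update in (\ref{Q_l_alg_finite}) treats $I_t$ as a Markov state, but under the true POMDP dynamics with an exploration policy the process $\{I_t\}$ is in general not Markov, because the hidden state $X_t$ carries additional information about future observations not captured in $I_t$. To handle this I would first use Assumption \ref{Q_assumption_finite}(2)--(3): positive Harris recurrence of $\{X_t\}$ under the exploration policy, together with $P(Y_t=y\mid x)>0$, guarantees that the augmented chain $(X_t, I_t, U_t)$ admits a unique invariant measure, so that the empirical distribution of $(I_t, U_t)$ converges and every $(I,u)$ pair is visited with positive asymptotic frequency. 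The harmonic learning rate in Assumption \ref{Q_assumption_finite}(1) then satisfies the Robbins--Monro conditions on almost every sample path, so a Tsitsiklis-type asynchronous stochastic approximation argument, as extended to non-Markovian environments in the spirit of the reference \cite{karayukselNonMarkovian}, applies and yields $Q_t \to Q^*$ almost surely, where $Q^*$ is the unique fixed point of the averaged Bellman operator
\[
(HQ)(I,u) = \hat{c}^N(I,u) + \beta \sum_{I'} P_\gamma(I'\mid I,u)\min_v Q(I',v),
\]
with $P_\gamma$ being the one-step transition of $I_t$ conditioned on $(I,u)$ under the stationary measure $\pi_\gamma^*$.

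Next, for (ii), I would identify $Q^*$ with the optimal Q-function of the approximate finite-state MDP obtained by freezing the pre-window belief $z_{n-N}^{-}$ at a representative $z$, namely the MDP with state space $\hat{\Z}^N$, one-stage cost $\hat{c}^N$ and transition $\hat{\eta}^N$ introduced just before the theorem. This identification follows because the ergodic average of the conditional transition of $I_{t+1}$ given $(I_t,U_t)=(I,u)$ under $\pi_\gamma^*$ coincides with $\hat{\eta}^N(\cdot\mid I,u)$, once the pre-window posterior has been effectively averaged out. Consequently $\gamma^N(I)\in\argmin_u Q^*(I,u)$ is optimal for the frozen-$z$ finite MDP. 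Invoking the finite-window discounted-cost error bound from the preceding theorem together with the filter-stability hypothesis $L_t^N\to 0$ as $N\to\infty$, the expected discounted cost of $\tilde{\phi}^N$ (the extension of $\gamma^N$ to $\hat{\Z}$) is within $\varepsilon$ of $J_\beta^*$ for all sufficiently large $N$. Finally, the vanishing discount bridge from the earlier theorem on near-optimality of discounted policies for the average cost criterion, applicable here because Theorem \ref{main} guarantees a solution to the ACOE under Assumption \ref{main_assumption}, converts this discounted near-optimality into near-optimality for the average cost criterion by choosing $\beta$ close to $1$ after $N$.

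The hardest step is expected to be the rigorous justification of Q-learning convergence in the non-Markovian regime, since the standard Jaakkola--Jordan--Singh or Tsitsiklis convergence theorems assume that the environment is itself a controlled Markov chain on the state the algorithm uses. One must show that the averaged operator $H$ is a $\beta$-contraction in sup-norm, that the temporal-difference noise has vanishing conditional expectation with respect to an appropriate filtration, and that ergodic averages of the empirical transitions converge to $P_\gamma$; each of these relies delicately on the coupling between hidden-state ergodicity and the recurrence of $(I,u)$ pairs. Once these ingredients are in place, uniqueness of the fixed point is immediate, and the combination of optimality of $\gamma^N$ for the frozen-$z$ MDP with the filter-stability control of $\sum_t \beta^t L_t^N$ closes out the argument.
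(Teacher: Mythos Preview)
The paper does not prove this theorem at all: it is stated as a citation of \cite[Theorem~4.1, Corollary~5.1]{kara2021convergence} and no proof is given in the present paper. There is therefore nothing to compare your proposal against here.

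That said, two comments on your sketch. First, your part~(i) is a reasonable outline of how the cited result is established: the key issue is indeed that $\{I_t\}$ is not Markov, and the resolution goes through ergodicity of the hidden chain under the exploration policy together with an asynchronous stochastic-approximation argument in which the limiting operator is the $\beta$-contraction built from the \emph{averaged} transition $P_\gamma(I'\mid I,u)$ under the stationary regime. Second, your part~(ii) overreaches relative to what the theorem actually asserts. The conclusion is only that $\gamma^N$ is optimal for the approximate finite-state MDP (the frozen-$z$ model with state space $\hat{\Z}^N$, cost $\hat{c}^N$, transition $\hat{\eta}^N$); this follows immediately once $Q^*$ is identified as the fixed point of that MDP's Bellman operator. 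The filter-stability hypothesis $L_t^N\to 0$ and the ACOE/vanishing-discount bridge you invoke are \emph{not} part of the proof of this theorem; they are used elsewhere in the paper (in the preceding theorem and in Section~\ref{impl}) to pass from optimality in the approximate MDP to near-optimality for the original discounted and then average-cost problems. So your sketch conflates the cited result with the surrounding narrative; strip out the ACOE and average-cost material and what remains is essentially the argument of the cited reference.
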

\begin{note}\label{Curtis_exp}
    \cite{mcdonald2020exponential}, Theorem 4.1 provides 
    sufficient conditions for uniform filter stability. 
    If we have 
    $\bar{\alpha}=(1-\tilde{\delta}(\T))(2-\delta(Q)) < 1$, 
    then the filter is exponentially stable with 
    coefficient $\bar{\alpha}$ for any control policy. 
    Here, $\tilde{\delta}(\T) = \inf_{u \in \mathcal{U}} 
    \delta(T(\cdot | \cdot, u))$. See also an analysis via the Hilbert projective metric \cite{le2004stability,mcdonald2020exponential}. 
\end{note}
This method allows for obtaining 
near-optimal policies of POMDP for the discounted cost, 
and consequently under Assumption \ref{main_assumption} 
for average cost. It is important to note that for 
the average cost optimal policy, the actions taken 
in the first $N$ steps are not significant. 
The cost incurred during these initial steps does 
not impact the overall outcome, and since the optimal 
cost for each state is constant, applying 
the optimal policy after $N$ steps will still 
yield an optimal policy.

\section{Concluding Remarks}

The average cost optimality of partially observable Markov decision process
is a challenging problem. In this paper, we presented explicit and easily testable conditions for the existence of solutions to 
the average cost optimality equation where the state space is compact. A comparison with the related literature and several examples are presented. Notably, our paper appears to be the first one where a contraction result is presented and utilized for the optimality analysis. We finally presented several implications of our analysis and existence result on approximations, learning, and robustness.

\appendix
\section{Proof of Theorem \ref{ergodicity}} 
\begin{proof}
  We equip $\mathcal{Z}$ with the metric $W_1$ to define the Lipschitz seminorm $\norm{f}_L$ of any Borel measurable function $f:\mathcal{Z}\to \mathbb{R}$.
      \begin{align}
      &W_{1}\left(\eta(\cdot \mid z_0,u), \eta\left(\cdot \mid z_0^{\prime},u\right)\right)\nonumber\\
      & =\sup _{f \in \operatorname{Lip}(\mathcal{Z},1), \;\norm{f}_\infty\leq D/2}\bigg|\int_{\mathbb{Y}} f\left(z_1\left(z_0^{\prime},u, y_1\right)\right) P\left(d y_1 \mid z_0^{\prime},u\right)\\
      &-\int_{\mathbb{Y}} f\left(z_1\left(z_0, u, y_1\right)\right) P\left(d y_1 \mid z_0, u\right)\bigg|\label{kappabeta1}
     \end{align}
  
     For any $f:\mathcal{Z}\to \mathbb{R}$ such that $\norm{f}_L\leq 1$ and $\norm{f}_\infty\leq D/2$, we have 
      \begin{align}
      &\left|\int_{\mathbb{Y}} f\left(z_1\left(z_0^{\prime}, u, y_1\right)\right) P\left(d y_1 \mid z_0^{\prime}, u\right)-\int_{\mathbb{Y}} f\left(z_1\left(z_0, u,  y_1\right)\right) P\left(d y_1 \mid z_0, u\right)\right| \nonumber\\
      &\leq \left|\int_{\mathbb{Y}} f\left(z_1\left(z_0^{\prime}, u, y_1\right)\right) P\left(d y_1 \mid z_0^{\prime}, u\right)-\int_{\mathbb{Y}} f\left(z_1\left(z_0^{\prime}, u, y_1\right)\right) P\left(d y_1 \mid z_0, u\right)\right| \nonumber\\
      &+ \int_{\mathbb{Y}}\left|f\left(z_1\left(z_0^{\prime}, u, y_1\right)\right)-f\left(z_1\left(z_0, u, y_1\right)\right)\right| P\left(d y_1 \mid z_0, u\right) \nonumber\\
      &\leq\frac{D}{2}\left\|P\left(\cdot \mid z_0^{\prime}, u \right)-P\left(\cdot \mid z_0, u\right)\right\|_{T V}\\
      &+ \int_{\mathbb{Y}}\left|f\left(z_1\left(z_0^{\prime}, u, y_1\right)\right)-f\left(z_1\left(z_0, u, y_1\right)\right)\right| P\left(d y_1 \mid z_0, u\right).\label{second}
      \end{align}
      
      For the first term:
      \begin{align}\label{TV1}
      &\left\|P\left(\cdot \mid z_0^{\prime}, u\right)-P\left(\cdot \mid z_0, u\right)\right\|_{T V}\\
      &=\sup _{\|g\|_{\infty} \leq 1}\left|\int g\left(y_1\right) P\left(d y_1 \mid z_0^{\prime}, u\right)-\int g\left(y_1\right) P\left(d y_1 \mid z_0, u\right)\right|\nonumber \\
      &\leq (1-\delta(Q))\left\|\mathcal{T}\left(d x_1 \mid z_0^{\prime}, u\right)-\mathcal{T}\left(d x_1 \mid z_0, u\right)\right\|_{T V}
      \end{align}
      by the Dobrushin Contraction Theorem \cite{dobrushin1956central}.

      \begin{align}\label{TV2}
      &\left\|\mathcal{T}\left(d x_1 \mid z_0^{\prime}, u\right)-\mathcal{T}\left(d x_1 \mid z_0, u\right)\right\|_{T V}
      \\ & = \sup _{\|g\|_{\infty} \leq 1}\left(\int g\left(x_1\right) T\left(d x_1 \mid z_0^{\prime}, u\right)-\int g\left(x_1\right) T\left(d x_1 \mid z_0, u\right)\right)\nonumber\\
        &= \sup _{\|g\|_{\infty} \leq 1}\left(\int\Tilde{g_g}(x_0)z_0^{\prime}(dx_0)-\Tilde{g_g}(x_0)z_0(dx_0)\right)
      \end{align}
      where
      \begin{align*}
          \Tilde{g_g}(x)= \int g\left(x_1\right) T\left(d x_1 \mid x, u\right).
      \end{align*}
      
      For all $x_0^{\prime}, x_0\in \mathbb{X}$, 
      we have, 
      \begin{align*}
      \left\|\mathcal{T}\left(d x_1 \mid x_0^{\prime}, u\right)-\mathcal{T}\left(d x_1 \mid x_0, u\right)\right\|_{T V}\leq \alpha d(x_0,x_0^{\prime}).
      \end{align*}
      As a result, we get $\Tilde{g}/\alpha \in Lip(\mathbb{X},1)$.
  
      Then, by inequality (\ref{TV1}) and (\ref{TV2}) we can write 
      \begin{align}\label{TV}
      \left\|P\left(\cdot \mid z_0^{\prime}, u\right)-P\left(\cdot \mid z_0, u\right)\right\|_{T V} \leq \alpha(1-\delta(Q))W_{1}\left(z_0^{\prime}, z_0\right).
      \end{align}
      Finally, we can analyze the second term in (\ref{second})
      \begin{align}\label{eq3}
      &\int_{\mathbb{Y}}\left|f\left(z_1\left(z_0^{\prime}, u, y_1\right)\right)-f\left(z_1\left(z_0, u, y_1\right)\right)\right| P\left(d y_1 \mid z_0, u\right)\nonumber\\
      &\leq \int_{\mathbb{Y}}W_1(z_1\left(z_0^{\prime}, u, y_1\right), z_1\left(z_0, u, y_1\right)) P\left(d y_1 \mid z_0, u\right)\nonumber\\
      &=\int_{\mathbb{Y}} \sup_{g \in \operatorname{Lip}(\mathbb{X})}\left(\int_{\mathbb{X}} g(x_1)w_{y_1}(dx_1)\right)P\left(d y_1 \mid z_0, u\right),
     \end{align}
      where $w_{y_1}=\left(z_1\left(z_0^\prime, u, y_1\right)- z_1\left(z_0, u, y_1\right)\right)$ 
      which is a signed measure on $\mathbb{X}$. 
      By the Measurable Selection Theorem
      \footnote{[\cite{himmelberg1976optimal}, Theorem 2][Kuratowski Ryll-Nardzewski Measurable Selection Theorem]
Let $\mathbb{X}, \mathbb{Y}$ be Polish spaces and $\Gamma=(x, \psi(x))$ where $\psi(x) \subset \mathbb{Y}$ be such that, $\psi(x)$ is closed for each $x \in \mathbb{X}$ and let $\Gamma$ be a Borel measurable set in $\mathbb{X} \times \mathbb{Y}$. Then, there exists at least one measurable function $f: \mathbb{X} \rightarrow \mathbb{Y}$ such that $\{(x, f(x)), x \in \mathbb{X}\} \subset \Gamma$.
}
       choose measurable 
      $$g_y\in \arg\sup_{g \in \operatorname{Lip}(\mathbb{X},1)}\left(\int_{\mathbb{X}} g(x)w_{y}(dx)\right).$$ 
  
      After that we can continue with the equation (\ref{eq3}),
      \begin{align}
      &\int_{\mathbb{Y}} \sup_{g \in \operatorname{Lip}(\mathbb{X},1)}\left(\int_{\mathbb{X}} g(x_1)w_{y_1}(dx_1)\right)P\left(d y_1 \mid z_0, u\right)\nonumber\\
      &=\int_{\mathbb{Y}} \int_{\mathbb{X}} g_{y_1}(x_1)z_1\left(z_0^\prime, u, y_1\right)(dx_1) P(d y_1 \mid z_0, u)\\
      &-\int_{\mathbb{Y}}\int_{\mathbb{X}} g_{y_1}(x_1)z_1\left(z_0^\prime, u, y_1\right)(dx_1) P(d y_1 \mid z_0^\prime, u)\label{gy1}\\
      &+\int_{\mathbb{Y}} \int_{\mathbb{X}} g_{y_1}(x_1)z_1\left(z_0^\prime, u, y_1\right)(dx_1) P(d y_1 \mid z_0^\prime, u)\\
      &-\int_{\mathbb{Y}}\int_{\mathbb{X}} g_{y_1}(x_1)z_1\left(z_0, u, y_1\right)(dx_1) P(d y_1 \mid z_0, u)\label{gy2}
      \end{align}
      
      For the first term, we can write by the same argument as earlier
      \begin{align*}
      \norm{\int_{\mathbb{X}} g_{y_1}(x_1)z_1\left(z_0^\prime, u, y_1\right)(dx_1)}_\infty\leq\norm{g_{y_1}}_\infty \leq D/2 .
      \end{align*}
      So,
      \begin{align}\label{sup-1}
      &\int_{\mathbb{Y}} \int_{\mathbb{X}} g_{y_1}(x_1)z_1\left(z_0^\prime, u, y_1\right)(dx_1) P(d y_1 \mid z_0, u)\\
      &-\int_{\mathbb{Y}}\int_{\mathbb{X}} g_{y_1}(x_1)z_1\left(z_0^\prime, u, y_1\right)(dx_1) P(d y_1 \mid z_0^\prime, u)\nonumber\\
      &\leq \frac{D}{2} \left\|P\left(\cdot \mid z_0^{\prime}, u\right)-P\left(\cdot \mid z_0, u\right)\right\|_{T V}\nonumber\\
      & \leq \alpha \frac{D}{2} (1-\delta(Q))W_{1}\left(z_0^{\prime}, z_0\right)
      \end{align}
      by inequality (\ref{TV}).
      
      For the second term,  we can write by smoothing
      \begin{align}\label{sup-2}
      &\int_{\mathbb{Y}} \int_{\mathbb{X}} g_{y_1}(x_1)z_1\left(z_0^\prime, u, y_1\right)(dx_1) P(d y_1 \mid z_0^\prime, u)\\
      &-\int_{\mathbb{Y}}\int_{\mathbb{X}} g_{y_1}(x_1)z_1\left(z_0, u, y_1\right)(dx_1) P(d y_1 \mid z_0, u)\nonumber\\
     &=\int_{\mathbb{X}} \omega(x_1) \mathcal{T}\left(d x_1 \mid z_0^{\prime}, u\right)-\int_{\mathbb{X}}\omega(x_1) \mathcal{T}\left(d x_1 \mid z_0, u\right)
      \end{align}
      where $$\omega(x_1)=\int_{\mathbb{Y}} g_{y_1}(x_1)Q\left(d y_1 \mid x_1\right).$$
     
      For any $x^\prime, x^{\prime\prime} \in \mathbb{X}$, 
      \begin{align*}
      &\int_{\mathbb{X}} \omega(x) \mathcal{T}\left(d x \mid x^{\prime\prime}, u\right)-\int_{\mathbb{X}}\omega(x) \mathcal{T}\left(d x \mid x^{\prime}, u\right)\\
      &\leq \norm{\omega}_\infty \left\|\mathcal{T}(\cdot \mid x^{\prime\prime}, u)-\mathcal{T}\left(\cdot \mid x^{\prime}, u\right)\right\|_{T V}\\
      &\leq \norm{\omega}_\infty\alpha d(x^{\prime\prime},x^{\prime})\\
      &\leq\alpha \frac{D}{2} d(x^{\prime\prime}, x^{\prime})
      \end{align*}
      So, by definition of the $W_1$ norm (\ref{defkappanorm}), we have
      \begin{align}\label{sup-2-1}
      &\int_{\mathbb{X}} \omega(x_1) \mathcal{T}\left(d x_1 \mid z_0^{\prime}, u\right) \nonumber \\
  & \qquad    -\int_{\mathbb{X}}\omega(x_1) \mathcal{T}\left(d x_1 \mid z_0, u\right)\nonumber\\
      &=\int_{\mathbb{X}}\int_{\mathbb{X}} \omega(x_1) \mathcal{T}\left(d x_1 \mid x_0, u\right)z^\prime_0(dx_0) \nonumber \\
  & \qquad -\int_{\mathbb{X}} \int_{\mathbb{X}}\omega(x_1) \mathcal{T}\left(d x_1 \mid x_0, u\right)z_0(dx_0)\nonumber\\
      &\leq  \alpha \frac{D}{2} W_{1}\left(z_0, z_0^{\prime}\right).
      \end{align}
      So, by the inequalities (\ref{gy1}), (\ref{gy2}), (\ref{sup-1}), (\ref{sup-2}), (\ref{sup-2-1}) we get
      \begin{align}\label{last2}
      & \int_{\mathbb{Y}}\left|f\left(z_1\left(z_0^{\prime}, u, y_1\right)\right)-f\left(z_1\left(z_0, u, y_1\right)\right)\right| P\left(d y_1 \mid z_0, u\right) \nonumber\\
      &\leq \alpha \frac{D}{2} (2-\delta(Q))W_{1}\left(z_0^{\prime}, z_0\right).
      \end{align}
      If we take the supremum of the equation 
     over all $f\in Lip(\mathcal{Z})$, 
     then by using the inequalities 
     (\ref{second}),(\ref{TV}), and (\ref{last2}), 
     we can write:
      \begin{align}\label{imp}
      &W_{1}\left(\eta(\cdot \mid z_0, u), \eta\left(\cdot \mid z_0^{\prime}, u\right)\right)
      \leq \left(\frac{\alpha D (3-2\delta(Q))}{2}\right)W_{1}\left(z_0, z_0^{\prime}\right)\end{align}
      \end{proof}

\section{Proof of Lemma \ref{exisACOE}}\label{Acoe_proof}
We present a specialization of \cite[Theorem 7.3.3]{yuksel2020control} to the compact case as in the paper we have that $\Z$ is compact.

Let us first recall the Arzelà-Ascoli theorem.
\begin{theorem}[\cite{Dud02} Theorem 2.4.7]\label{Arzoli-Ascali} Let $F$ be an 
equi-continuous family of functions 
on a compact space $\mathbb{X}$ and let 
$h_n$ be a sequence in $F$ such that the 
range of $f_n$ is compact. Then, there 
exists a subsequence $h_{n_k}$ which 
converges uniformly to a continuous 
function. 
\end{theorem}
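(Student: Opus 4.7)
The plan is to prove this classical Arzelà–Ascoli statement by a standard three-step argument: extract a countable dense set in $\mathbb{X}$, use a Cantor diagonal procedure to obtain a subsequence that converges pointwise on that set, and then promote pointwise convergence on the dense set to uniform convergence on all of $\mathbb{X}$ using equicontinuity together with the compactness of the domain.

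First I would invoke that a compact metric space is separable, so there exists a countable dense set $\{x_k\}_{k \geq 1} \subseteq \mathbb{X}$. The assumption that the union of ranges of the $h_n$ lies in a compact set $K$ gives, in particular, that for each fixed $k$ the sequence $\{h_n(x_k)\}_{n \geq 1}$ lies in the compact $K$. Therefore it has a convergent subsequence. I would then carry out the diagonal extraction: choose a subsequence $h_{n^{(1)}_j}$ converging at $x_1$; from this, a further subsequence $h_{n^{(2)}_j}$ converging at $x_2$; and so on. The diagonal subsequence $h_{n_k} := h_{n^{(k)}_k}$ then converges at every $x_j$, since it is eventually a subsequence of the $j$-th extracted sequence.

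Next I would upgrade pointwise convergence on the dense set to uniform Cauchyness. Fix $\varepsilon > 0$. By equicontinuity of $F$, there is $\delta > 0$ such that $d(x,y) < \delta$ implies $|f(x) - f(y)| < \varepsilon/3$ for every $f \in F$; in particular for every $h_{n_k}$. By compactness of $\mathbb{X}$, finitely many balls $B(x_{k_1}, \delta), \dots, B(x_{k_N}, \delta)$ cover $\mathbb{X}$. Since $h_{n_k}$ converges at each of the finitely many points $x_{k_1}, \dots, x_{k_N}$, there exists $M$ such that for all $k, \ell \geq M$ and all $i = 1, \dots, N$, $|h_{n_k}(x_{k_i}) - h_{n_\ell}(x_{k_i})| < \varepsilon/3$. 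For any $x \in \mathbb{X}$, pick $i$ with $d(x, x_{k_i}) < \delta$; then a triangle inequality
\[
|h_{n_k}(x) - h_{n_\ell}(x)| \leq |h_{n_k}(x) - h_{n_k}(x_{k_i})| + |h_{n_k}(x_{k_i}) - h_{n_\ell}(x_{k_i})| + |h_{n_\ell}(x_{k_i}) - h_{n_\ell}(x)| < \varepsilon
\]
shows the diagonal subsequence is uniformly Cauchy. Since the range compact set $K$ is complete, the Cauchy sequence $\{h_{n_k}(x)\}$ converges for every $x$ to some limit $h(x)$, and uniform Cauchyness gives uniform convergence $h_{n_k} \to h$. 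Finally, continuity of $h$ follows since a uniform limit of equicontinuous functions is continuous: given $\varepsilon > 0$, pick $\delta$ from equicontinuity; the bound $|h_{n_k}(x) - h_{n_k}(y)| < \varepsilon/3$ passes to the limit as $k \to \infty$, yielding $|h(x) - h(y)| \leq \varepsilon/3 < \varepsilon$ whenever $d(x,y) < \delta$.

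There is no serious obstacle in this argument; it is a textbook proof. The only point that requires mild care is the exact reading of the hypothesis ``the range of $f_n$ is compact'': what is used is that $\bigcup_n h_n(\mathbb{X})$ lies in a compact (hence complete and totally bounded) subset of the target, since this is what supplies both the existence of pointwise subsequential limits at each $x_k$ and the completeness needed to turn uniform Cauchyness into uniform convergence. If instead the hypothesis were only pointwise relative compactness, the same argument works verbatim after replacing ``compact'' by ``relatively compact'' in the appropriate places.
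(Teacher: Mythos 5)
Your proof is correct: it is the standard Arzel\`a--Ascoli argument (countable dense subset, Cantor diagonalization for pointwise convergence, equicontinuity plus a finite $\delta$-net to upgrade to uniform Cauchyness, completeness of the target to conclude). The paper does not prove this statement at all --- it is quoted verbatim as Theorem 2.4.7 of Dudley's textbook and used as a black box in the proof of Lemma~\ref{exisACOE} --- so there is nothing to compare against; your reading of the (somewhat loosely worded) hypothesis ``the range of $f_n$ is compact'' as the ranges lying in a common compact set is the right one, and it is exactly what is needed both for the pointwise subsequential limits and for completeness.
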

\begin{proof}[Proof of Lemma \ref{exisACOE}]
First, since the cost function $\tilde{c}$
is bounded
(assume it is bounded by $M<\infty$)
the expression 
$(1-\beta)J_\beta(z)$ 
is uniformly bounded by $M$
for every $\beta \in (0,1)$ and $z\in \Z$.
By the Bolzano–Weierstrass theorem, for a fixed
 $z$ and any sequence 
$\beta \uparrow 1$,
there exists a subsequence $\beta(k)\uparrow 1$ such that
$\left(1-\beta(k)\right) J_{\beta(k)}\left(z\right) 
\rightarrow \rho^*$ for some $\rho^*$. 
Observe that for any $z \in \Z$
$$
\left(1-\beta(k)\right) J_{\beta(k)}(z)=
\left(1-\beta(k)\right)\left(J_{\beta(k)}(z)-
J_{\beta(k)}\left(z_0\right)\right)+
\left(1-\beta(k)\right) J_{\beta(k)}\left(z_0\right),
$$
which, by the uniform boundedness of $h_{\beta(k)}(z)=J_{\beta(k)}(z)-J_{\beta(k)}\left(z_0\right)$, 
implies that the limit $\rho^*$ does not depend on $z$.

By Assumption \ref{lect}-\ref{equic},
$h_{\beta(k)}$ is equicontinuous.
By
Theorem \ref{Arzoli-Ascali}, 
there exists a further subsequence of 
$h_{\beta(k)}$,
$\left\{h_{\beta(k_l)}\right\}$, which converges (uniformly on compact sets) to a continuous and bounded function $h$. 
Since $\U$ is compact and cost function is continuous
and transition kernel is weakly continuous,
we have by the Bellman equation:
\begin{align}
\label{J_diff} & 
J_\beta(z)-J_\beta\left(z_0\right)\nonumber \\ &
=\min _{u \in \mathbb{U}}\left(c(z, u)+\beta \int 
\eta\left(d z^{\prime} \mid z, u\right)
\left(J_\beta\left(z^{\prime}\right)-J_\beta\left(z_0\right)\right)-(1-\beta) 
J_\beta\left(z_0\right)\right)
\end{align}

Take the limit in (\ref{J_diff}) along 
the subsequence $\beta(k_l)$, we get
\begin{align}
\nonumber h(z) & =\lim _l \min _{\mathbb{U}}\left[c(z, u)+\beta\left(k_l\right) \int_{\Z} 
h_{\beta(k_l)}(y) \eta(d y \mid z, u) - (1-\beta(k_l)) 
J_{\beta(k_l)}\left(z_0\right) \right] \\
& =\lim _l \min _{\mathbb{U}}\left[c(z, u)+\beta\left(k_l\right) \int_{\Z} 
h_{\beta(k_l)}(y) \eta(d y \mid z, u) \right] - \rho^* \nonumber 
\end{align}

From this we obtain
\begin{align}\label{lim_h}
    \lim _l \min _{\mathbb{U}}\left[c(z, u)+\beta\left(k_l\right) \int_{\Z} 
    h_{\beta(k_l)}(y) \eta(d y \mid z, u) \right] 
    = h(z) + \rho^*
\end{align}

We now show that in the above, the order of limit and minimization can be swapped: 
Using the compactness of $\mathbb{U}$,
the continuity of $$c(z, u)+\beta(k_l) \int_{\Z} h_{\beta(k_l)}(y) \eta(d y | z, u)$$ on $\mathbb{U}$, 
and the equicontinuity of $\left\{h_{\beta(k)}\right\}$ we can define a sequence $u_l$ such that:
\begin{align*}
    & A_l:= c\left(z, u_l\right)+\beta\left(k_l\right) \int_{\Z} h_{\beta\left(k_l\right)}(y) \eta\left(d y \mid z, u_l\right)=\\
    &\min _{\mathbb{U}}\left[c(z, u)+\beta\left(k_l\right) \int_{\Z} h_{\beta\left(k_l\right)}(y) \eta(d y \mid z, u)\right] 
\end{align*}
By the compactness of the action space $\U$, 
there exists a further subsequence such that 
$u_{l_n} \rightarrow u^*$
for some $u^*$ along this 
further subsequence. 
By weak continuity of the kernel, 
we then have that 
$\eta\left(d y \mid z, u_{l_n}\right) \rightarrow \eta(d y \mid z, u^*)$. Since $h_{\beta\left(k_l\right)}$ is uniformly bounded, we have $\lim A_l = \lim A_{l_n}=c(z, u)+\int_{\Z} h(y) \eta(d y \mid z, u^*)$.


On the other hand, for any fixed action $\bar{u}$, we know that
$$c\left(z, \bar{u}\right)+\beta\left(k_l\right) \int_{\Z} h_{\beta\left(k_l\right)}(y) \eta\left(d y \mid z, \bar{u}\right)
\geq A_l$$
and taking the limit as $l \to \infty$, by the same argument, 
$$c(z, \bar{u})+\int_{\Z} h(y) \eta(d y \mid z, \bar{u})
\geq \lim A_l =c(z, u)+\int_{\Z} h(y) \eta(d y \mid z, u).$$
Therefore,
$$
\lim A_l = \min _{\mathbb{U}}\left[c(z, u)+\int_{\Z} h(y) \eta(d y \mid z, u)\right].$$
Combining this with equation (\ref{lim_h}), we obtain
$$h(z)+\rho^*=\min _{\mathbb{U}}\left[c(z, u)+\int_{\Z} h(y) \eta(d y \mid z, u)\right].$$
Thus, we have found a bounded solution to the ACOE equation, completing the proof.
\end{proof}

\bibliographystyle{unsrt}
\bibliography{SerdarBibliography.bib}

\end{document}